\documentclass[11pt]{article}

\usepackage{amssymb}
\usepackage{amsfonts,amsmath,mathtools}
\usepackage{graphics}
\usepackage{lipsum}
\usepackage[ruled]{algorithm2e}
\usepackage{float}
\usepackage{url}
\usepackage{subfig}
\usepackage{xcolor}
\usepackage[export]{adjustbox}
\usepackage{tikz}
\usepackage[margin=1.6in]{geometry}


\newcommand\blfootnote[1]{%
  \begingroup
  \renewcommand\thefootnote{}\footnote{#1}%
  \addtocounter{footnote}{-1}%
  \endgroup
}

\newtheorem{thm}{Theorem}[section]
 
 \newtheorem{lem}[thm]{Lemma}
 \newtheorem{prop}[thm]{Proposition}
 \newtheorem{defn}[thm]{Definition}
\newtheorem{rem}[thm]{Remark}
 \newtheorem{ex}[thm]{Example}
\newtheorem{Probl}[thm]{Problem}
\newtheorem{Charact}[thm]{Characterization}
\newtheorem{Discus}[thm]{\rm{\emph{Discussion}}}
\newtheorem{Const}[thm]{Construction}

\newenvironment{proof}[1][Proof]{\textrm{\em #1.} }{$\Box$\medskip\medskip}

\newcommand\Tor{\operatorname{Tor}}
\newcommand\Char{\operatorname{char}}

\newcommand\Gin{\operatorname{Gin}}
\newcommand\Gap{\operatorname{Gap}}
\newcommand\gap{\operatorname{-gap}}

\newcommand\wdt{{\operatorname{wd}}}

\newcommand\Mon{{\operatorname{Mon}}}

\newcommand\Shad{{\operatorname{Shad}}}

\newcommand\BShad{{\operatorname{BShad}}}
\newcommand\indeg{{\operatorname{indeg}}}

\newcommand\supp{{\operatorname{supp}}}

\newcommand\slex{{\operatorname{slex}}}

\newcommand\m{{\operatorname{max}}}


\def\NZQ{\mathbb}
\def\NN{{\NZQ N}}
\def\ZZ{{\NZQ Z}}
\def\FFF{{\NZQ F}}

\newcommand\bd{\mathbf{ds}}
\newcommand\bl{\mathbf{dl}}
\newcommand\C{{\operatorname{Corn}}}

\captionsetup[subfigure]{labelformat=empty}
\captionsetup[table]{position=top}
\captionsetup[subtable]{position=top}

\begin{document}

\title{On the extremal Betti numbers of squarefree monomial ideals}
\author{Luca Amata, Marilena Crupi*
}  
\newcommand{\Addresses}{{
  \footnotesize
  \textsc{Department of Mathematics and Computer Sciences, Physics and Earth Sciences, University of Messina, Viale Ferdinando Stagno d'Alcontres 31, 98166 Messina, Italy}
\begin{center}
 \textit{E-mail addresses}: \texttt{lamata@unime.it}; \texttt{mcrupi@unime.it}
\end{center}

}}
\date{}
\maketitle
\Addresses

\begin{abstract}
Let $K$ be a field and $S = K[x_1,\dots,x_n]$ be a polynomial ring over $K$. We discuss the behaviour of the extremal Betti numbers of the class of squarefree strongly stable ideals. More precisely, we give a numerical characterization of the possible extremal Betti numbers (values as well as positions) of such a class of squarefree monomial ideals.
\blfootnote{
\hspace{-0,3cm} \emph{Keywords:} graded ideals, squarefree monomial ideals, minimal graded resolutions.

\emph{2010 Mathematics Subject Classification:} 05E40, 13B25, 13D02, 16W50, 68W30.

* \emph{Corresponding author: Marilena Crupi; email: mcrupi@unime.it.}}
\end{abstract}

\section{Introduction }

Let $K$ be a field and $S = K[x_1,\dots,x_n]$ be the polynomial ring in $n$ variables with coefficients in $K$. A squarefree monomial ideal of $S$ is a monomial ideal  generated by squarefree monomials. 
Such ideals are also known as Stanley--Reisner ideals, and quotients by them are called 
Stanley--Reisner rings. The combinatorial nature of these algebraic objects comes from their close
connections to simplicial topology. Many authors have studied the class of squarefree monomial ideals from the viewpoint of commutative algebra and combinatorics (see, for example \cite{AHH2, AHH3, CW, MS}, and the references therein).

Let $I$ be a graded ideal of $S$. A graded Betti number $\beta_{k,k+\ell}(I) \neq 0$ is called {\it extremal} if $\beta_{i,\, i+j}(I) = 0$ for all $i \geq k$, $j \geq \ell$, $(i, j) \neq (k, \ell)$ \cite{BCP}. The pair $(k, \ell)$ is called a \emph{corner} of $I$. If $\beta_{k_i, k_i+\ell_i}(I)$ ($i=1, \ldots, r$) are extremal Betti numbers of a graded ideal $I$, then the set $\C(I) = \{(k_1, \ell_1), (k_2, \ell_2), \ldots, (k_r, \ell_r)\}$ will be called the corner sequence of $I$ \cite[Definition 4.1]{MC}. In the Macaulay or CoCoA Betti diagram of $I$, the graded Betti number $\beta_{i,j}(I)$ is plotted in column $i$ and row $j-i$. Using such a notation, a graded Betti number $\beta_{k,k+\ell}(I)$ is extremal if it is the only entry in the quadrant where it is the northwest corner. 
Projective dimension measures the column index of the easternmost extremal Betti number, whereas regularity measures the row index of the
southernmost extremal Betti number. Indeed, the extremal Betti numbers are a generalization of such meaningful algebraic invariants. 

For a monomial ideal $I$ of $S$, let as denote by $G(I)$ the unique minimal set of monomial generators of $I$ and for a monomial $1 \neq u \in S$, let us define $\supp(u)=\{i: x_i\,\, \textrm{divides}\,\, u\}$. A monomial ideal $I$ of $S$ is \textit{strongly stable} if for all $u \in G(I)$ one has $(x_j u)/x_i \in I$ for all $i \in \supp(u)$ and all $j < i$ \cite{EK, JT}; whereas a squarefree monomial ideal $I$ of $S$ is \textit{squarefree strongly stable} if for all $u \in G(I)$ one has $(x_j u)/x_i \in I$ for all $i \in \supp(u)$ and all $j < i$, $j \notin \supp(u)$ \cite{AHH2,JT}.

Assume that the characteristic of the base field $K$ is zero. If $I$ is a graded ideal of $S$, then the generic initial ideal $\Gin(I)$, with respect to the reverse lexicographical order on $S$ induced by $x_1> \cdots >x_n$, is a strongly stable ideal of $S$  (see, for instance, \cite{Ei, JT}). If $I$ is squarefree, then $\Gin(I)$ is not in general squarefree. In \cite{AHH3}, the authors have introduced a certain operator $\sigma$ which transforms $\Gin(I)$ to a squarefree monomial ideal of $S$. Such an ideal, denoted by $\Gin(I)^{\sigma}$, is squarefree strongly stable \cite[Lemma 1.2.]{AHH3}. On the other hand, \cite[Theorem 2.4.]{AHH3} assures that if $I$ is a squarefree ideal then the extremal Betti numbers are preserved when we pass from $I$ to $\Gin(I)^{\sigma}$. Hence, if one wants to study the extremal Betti numbers of squarefree monomial ideals in a polynomial ring $S = K[x_1,\dots,x_n]$ with $\Char(K)=0$, it is not restrictive to consider the behavior of such extremal Betti numbers for the class of squarefree strongly stable ideals.

In this paper, we are interested to the study of the extremal Betti numbers of the class of squarefree strongly stable ideals of $S$.

The first result on the behavior of the extremal Betti numbers of such a class of squarefree monomial ideals can be found in \cite[Propostion 4.1]{CU2}. More precisely, the authors in \cite{CU2} gave a criterion to determine whether a graded Betti number is extremal: \emph{let $I$ be a  squarefree strongly stable ideal of $S$. $\beta_{k, \, k+ \ell}(I)$ is an extremal Betti number if and only if $k + \ell = \max\{\m(u) : u \in G(I)_{\ell}\}$ and $\m(u)< k+j$, for all $j > \ell$ and for all $u\in G(I)_j$} (Characterization \ref{Char}); $G(I)_{\ell}$ is the set of monomials $u$ of $G(I)$ such that $\deg u = \ell$. They did not give any numerical charaterization of the possible extremal Betti numbers of such a class of ideals. Later, such a criterion was generalized to the class of squarefree strongly stable submodules of a finitely generated graded free $S$--module with a homogeneous basis in \cite[Theorem 4.3]{CF4}. Moreover, a criterion for determining their positions and their number was also given in \cite[Section 5]{CF4}. Such a criterion will be an important tool for the development of this article.

Differently from the non--squarefree case, not much is known about the numerical characterization of the possible extremal Betti numbers (values and positions) of the class of squarefree strongly stable ideals. Indeed, many authors have faced and solved such a question for the class of strongly stable ideals in $S$ (\cite{AC, MC, MC2, MC3, CU1, CU2, HSV}). More precisely, the authors of the previous papers have examined the following problem:
\begin{Probl} \label{probl1} Given two positive integers $n, r$, $1\le r \le n-1$, $r$ pairs of positive integers $(k_1, \ell_1)$,  $\ldots$, $(k_r, \ell_r)$ such that $n-1 \ge k_1 >k_2 > \cdots >k_r\ge 1$ and $1\le \ell_1 < \ell_2 < \cdots < \ell_r$ and $r$ positive integers $a_1, \ldots, a_r$, under which conditions does there exist a graded ideal $I$ of $S=K[x_1,\dots,x_n]$ such that $\beta_{k_1, k_1+\ell_1}(I) = a_1$, $\ldots$, $\beta_{k_r, k_r+\ell_r}(I) = a_r$ are its extremal Betti numbers?
\end{Probl}

Positive answers to Problem \ref{probl1} can be found in \cite[Propositions 2.5, 3.5, Theorem 3.7]{MC}, \cite[Theorem 3.1]{CU1} and \cite[Theorem 3.7]{HSV} when $K$ is a field of characteristic $0$ (see also \cite[Proposition 3.1, Theorem 3.2]{AC}).
More specifically, in all the previous cited papers, numerical characterizations of the possible extremal Betti numbers of a graded ideal $I$ of initial degree $\ge 2$ of a standard graded polynomial ring over a field of characteristic 0  have been given. As we have just underlined, in such a case the generic initial ideal of a graded ideal in $S$ (with respect to the reverse lexicographical order on $S$) is strongly stable and since the extremal Betti numbers are preserved by passing from the graded ideal to its generic ideal \cite{BCP}, the problem is equivalent to the characterization of the possible extremal Betti numbers of a strongly stable ideal of $S$. Moreover, in \cite{AC} a CoCoA package for computing the \emph{smallest strongly stable} ideal of $S$ to face Problem \ref{probl1} has been developed.  In particular, the package is able to determine all the possible $r$-tuples of positive integers $(a_1, \ldots, a_r)$ for which such an ideal does exist. Finally, a complete answer to such a problem reformulated in terms of graded submodules of a finitely generated graded free $S$--module has been stated in \cite[Theorem 4.6]{MC}, \cite[Theorem 4.6]{MC2} and \cite[Theorem 1]{MC3}. 

The purpose of this paper is to numerically characterize the possible extremal Betti numbers of squarefree monomial ideals of a standard graded polynomial ring $S$ over a field of characteristic 0. 
Our techniques involve overall tools from enumerative combinatorics.

The plan of the paper is as follows. In Section \ref{pre}, some notions that  will be used throughout the paper are recalled. In Section \ref{sec:1}, firstly we identify the admissible corner sequences of a squarefree strongly stable ideal for $n=2,3,4$. Then, we determine the maximal number of corners allowed for a squarefree strongly stable ideal $I$ of $S$ with a corner in its initial degree (Propositions \ref{prop:lungh2}, \ref{prop:lungh3}). 
Moreover, given $n-\ell_1$ ($n \geq 5$) pairs of positive integers $(k_1, \ell_1), (k_2, \ell_2), \ldots, (k_{n-\ell_1}, \ell_{n-\ell_1})$, 
with $1\leq k_{n-\ell_1} < k_{n-\ell_1-1} < \cdots < k_1\leq n-3$ and $3 \leq \ell_1 < \ell_2 < \cdots < \ell_{n-\ell_1}\leq n-1$, we determine the conditions under which there exists a squarefree lex ideal  (Definition \ref{def:lex}) $I$ of $K[x_1,\dots,x_n]$ of initial degree $\ell_1$ having $\beta_{k_i, k_i+\ell_i}(I)$, $i=1, \ldots, r$, as extremal Betti numbers (Theorem \ref{thm:lex}). A complete description of the minimal system of monomial generators of $I$ is given. Squarefree lex ideals are a subclass of the class of squarefree strongly stable ideals \cite{AHH2}. Finally, in Section \ref{sec:2}, we face the squarefree version of Problem \ref{probl1}, \emph{i.e.}, the following problem: \emph{Given three positive integers $n\ge 4$, $\ell_1\ge 2$ and $1\le r \le n-\ell_1$, $r$ pairs of positive integers $(k_1, \ell_1)$,  $\ldots$, $(k_r, \ell_r)$ such that $n-3 \ge k_1 >k_2 > \cdots >k_r\ge 2$ and $2\le \ell_1 < \ell_2 < \cdots < \ell_r$, $k_i+\ell_i\le n$ ($i=1, \ldots, r$), and $r$ positive integers $a_1, \ldots, a_r$, under which conditions does there exist a squarefree monomial ideal $I$ of $S=K[x_1,\dots,x_n]$ such that $\beta_{k_1, k_1+\ell_1}(I) = a_1$, $\ldots$, $\beta_{k_r, k_r+\ell_r}(I) = a_r$ are its extremal Betti numbers?} We solve such a problem when $\Char(K)=0$ (Theorem \ref{thm:constru}). In such a case, the question is equivalent to the characterization of the possible extremal Betti numbers of a squarefree strongly stable ideal of $S$ as we have pointed out. The idea behind Theorem \ref{thm:constru} is to establish the bounds for the integers $a_i$ ($i=1, \ldots, r$), starting with $a_r$ and then arriving to $a_1$, by computing the cardinality of suitable sets of monomials. The key result in this Section is Theorem \ref{thm:count}. Let $(k,\ell)$ be a pair of positive integers and let $A^s(k,\ell)$ be the set of all squarefree monomials $u$ of $S$ of degree $\ell$ and such that $\max(u)=k+\ell$, with $\max(u)=\max\{i: x_i\,\, \textrm{divides}\,\, u\}$, ordered by the squarefree lex order $\geq_{\slex}$ defined in Section~\ref{pre}.
If $u\in A^s(k,\ell)$, Theorem \ref{thm:count} shows a method for determining the cardinality of the set of all squarefree monomials $w\in A^s(k,\ell)$ such that $w \geq_{\slex} u$.
We provide some examples illustrating the main obstructions to the issue. 
All the examples are constructed by means of {\em Macaulay2} packages \cite{GDS}, some of which were developed by the authors of this article.

\section{Preliminaries and notation} \label{pre}
Let us consider the polynomial ring $S=K[x_1,\ldots, x_n]$ as an $\NN$-graded ring where $\deg x_i =1$, $i=1,\ldots,n$.
A \textit{monomial ideal} $I$ of $S$ is an ideal generated by monomials. 
If $I$ is a monomial ideal of $S$, we denote by $G(I)$ the unique minimal set of monomial generators of $I$, by $G(I)_{\ell}$ the set of monomials $u$ of $G(I)$ such that $\deg u = \ell$, and by $G(I)_{\ge \ell}$ the set of monomials $u$ of $G(I)$ such that $\deg u \ge \ell$.
If $I=\oplus_{j \geq 0}I_j$ is a graded ideal of the polynomial ring $S$, we denote by $\indeg I$ the \emph{initial degree} of $I$, \emph{i.e.}, the minimum $j$ such that $I_j \neq 0$.

For a monomial $1 \neq u \in S$, we set \[\supp(u)=\{i: x_i\,\, \textrm{divides}\,\, u\},\]
and  we write
\[
\m(u) = \max \{i:i\in \supp(u)\}, \qquad \min(u) =\min\{i : i \in \supp(u)\}.\]
moreover, we set $\m(1) = \min(1) =0$.

A monomial $m \in S$ is called a \textit{squarefree monomial} if $m=x_{i_1}x_{i_2}\cdots x_{i_d}$ with $1\leq i_1<i_2< \cdots < i_d \leq n.$
If $T$ is a subset of $S$, we denote by $\Mon_d(T)$ the set of all monomials in $T$ and by $\Mon_d^s(T)$ the set of all squarefree monomials in $T$.

A monomial ideal $I$ is a \textit{squarefree monomial ideal} if $I$ is a monomial ideal of $S$ generated by squarefree monomials.

\begin{defn} \label{def:strongly} \rm Let $I$ be a squarefree monomial ideal of $S$. $I$  is called a \textit{squarefree stable ideal} if for all $u \in G(I)$ one has $(x_j u)/x_{\m(u)} \in I$ for all $j < \m(u), j \notin \supp(u)$.\\
$I$  is called a \textit{squarefree strongly stable ideal} if for all $u \in G(I)$ one has
$(x_j u)/x_i \in I$ for all $i \in \supp(u)$ and all $j < i$, $j \notin \supp(u)$.
\end{defn}

\begin{rem} 
Let $T$ be a set of squarefree monomials in $S$ of degree $d$. $T$ will be called a \emph{squarefree stable set} if for all $u \in T$ one has $(x_j u)/x_{\m(u)} \in T$ for all $j < \m(u), j \notin \supp(u)$.
$T$ will be called a \emph{squarefree strongly stable set} if for all $u \in T$ one has $(x_j u)/x_i \in T$ for all $i \in \supp(u)$ and all $j < i$, $j \notin \supp(u)$.\\
Hence, a squarefree monomial ideal $I$ of $S$ is squarefree (strongly) stable if $\Mon_d^s(I)$ is a squarefree (strongly) stable set, for all $d$.
\end{rem}

For every $1\leq d\leq n$,
we can order  $\Mon_d^s(S)$ with the \textit{squarefree lexicographic order} $\geq_{\slex}$ \cite{AHH2}.
More precisely, let
\[u=x_{i_1}x_{i_2}\cdots x_{i_d}, \qquad v=x_{j_1}x_{j_2}\cdots x_{j_d},\]
with $1\leq i_1< i_2< \cdots < i_d\leq n$, $1\leq j_1< j_2< \cdots < j_d\leq n$, be squarefree monomials of degree $d$ in $S$, then
\begin{equation}\label{def:slex}
\mbox{$u >_{\textrm{slex}} v$ \qquad if \qquad $i_1=j_1, \ldots, i_{s-1}=j_{s-1}$ \qquad and \qquad $i_s<j_s$},
\end{equation}
for some $1 \leq s \leq d$.

A nonempty set $L \subseteq \Mon_d^s(S)$ is called a \textit{squarefree lexsegment set} of degree $d$ if for $u \in L$, $v \in \Mon_d^s(S)$ such that $v >_{\slex} u$, then $v \in L$.

\begin{defn} \label{def:lex} \rm Let $I$ be a squarefree monomial ideal of $S$.  $I$ is a \textit{squarefree lexsegment ideal} of $S$ if for all squarefree monomials $u\in I$ and all squarefree monomials $v$ of the same degree with $v >_{\slex} u$, it follows that $v \in I$.
\end{defn}

\begin{ex} Let $S = K[x_1, x_2,x_3,x_4,x_5]$. The ideal $I$ $=$ $ (x_1x_2x_3,$ $ x_1x_2x_4,$ $x_1x_2x_5,$ $ x_1x_3x_4,$ $x_2x_3x_4x_5)$  
is a squarefree lexsegment ideal of $S$.
\end{ex}

For any graded ideal $I$ of $S$, there is a minimal graded free $S$-resolution \cite{BH}
\[
\FFF. : 0 \rightarrow F_s \rightarrow \cdots \rightarrow F_1 \rightarrow F_0 \rightarrow I \rightarrow 0,
\]
where $F_i = \oplus_{j \in \ZZ}S(-j)^{\beta_{i,j}}$. The integers $\beta_{i,j} = \beta_{i,j}(I) = \textrm{dim}_K \Tor_i(K,I)_j $ are called the \emph{graded Betti numbers} of $I$.

\begin{defn} {\em \cite{BCP}} \label{def:extr} \rm A graded Betti number $\beta_{k,k+\ell}(I) \neq 0$ is called {\it extremal} if $\beta_{i,\, i+j}(I) = 0$ for all $i \geq k$, $j \geq
\ell$, $(i, j) \neq (k, \ell)$.
\end{defn} 

The pair $(k, \ell)$ is called a \emph{corner} of $I$.

If  $I$ is a squarefree stable ideal, there exists a formula to compute the graded Betti numbers of $I$ (\cite{AHH2}):
\begin{equation}\label{AHHeq}
    \beta_{k, \, k+\ell}(I) =\sum_{u \in G(I)_\ell} \binom{\m(u)-\ell}{k}.
\end{equation}

Because of relation~(\ref{AHHeq}), next characterization holds true \cite{CF4, CU2}.

\begin{Charact}\label{Char}
\label{equiv} Let $I$ be a  squarefree stable ideal of $S$. $\beta_{k, \, k+ \ell}(I)$ is an extremal Betti number if and only if $k + \ell = \max\{\m(u) : u \in G(I)_{\ell}\}$ and $\m(u)< k+j$, for all $j > \ell$ and for all $u\in G(I)_j$.
\end{Charact}

As a consequence of such a characterization, one has that if $I$ is a squarefree stable
ideal of $S$ and $\beta_{k, \, k+ \ell}(I)$ is an extremal Betti number of $I$, then
\begin{equation}\label{eq:extr1}
\beta_{k, \, k+ \ell}(I) = \vert\{u \in G(I)_{\ell}\,:\, \m(u)= k+\ell\}\vert.
\end{equation}
Moreover, setting $\ell = \max\{j:G(I)_j \neq \emptyset\}$, $m = \max\{\m(u)\,:\, u\in G(I)\}$, then
$\beta_{m-\ell,\,m}$ is the unique extremal Betti number of $I$ if and only if
\[m = \max\{\m(u)\,:\, u\in G(I)_{\ell}\},\]
and $\m(w) < m$, for all $w\in G(I)_j$ with $j<\ell$.

\begin{rem}\label{rem:equ} 
If $I$ is a squarefree stable monomial ideal of $S$ and $\beta_{k, k+\ell}(I)$ is an extremal Betti number of $I$, then from Characterization \ref{Char}, we have the following bound:
\begin{equation}\label{diseq1}
    1\leq \beta_{k, k+\ell}(I) \leq \binom{k+\ell-1}{\ell-1}.
\end{equation}
In fact, there exist exactly $\binom{k+\ell-1}{\ell-1}$ squarefree monomials of degree $\ell$ in $S$ with $\m(u) = k+\ell$.
\end{rem}

Now, let $(k_1, \ell_1), \ldots, (k_r, \ell_r)$ ($n-1 \ge k_1 >k_2 > \cdots >k_r\ge 1$, $1\le \ell_1 < \ell_2 < \cdots < \ell_r$) be corners of a graded ideal $I$, according to \cite{MC}, the following notions can be introduced:
\[\C(I) = \{(k_1, \ell_1), \ldots, (k_r, \ell_r)\},\,\,\, a(I) = (\beta_{k_1, k_1+\ell_1}(I), \ldots, \beta_{k_r, k_r+\ell_r}(I)).\]
$\C(I)$ is called the \emph{corner sequence} of $I$, and $a(I)$ the \emph{corner values sequence} of $I$.

If $I$ is a squarefree ideal of $S$, then $k_i + \ell_i \le n$, for all $i=1, \ldots, r$.

\begin{ex} Let $S= K[x_1, x_2, x_3, x_4,x_5, x_6]$ and let
\[I = (x_1x_2, x_1x_3, x_1x_4, x_1x_5, x_2x_3x_4, x_2x_3x_5, x_2x_3x_6, x_2x_4x_5, x_2x_4x_6, x_3x_4x_5x_6)\]
be a squarefree strongly stable ideal of $S$. The extremal Betti numbers of $I$ are $\beta_{3,6}(I)=2,\, \beta_{2,6}(I)=1,$ as the Betti table of $I$ shows:
\[
\begin{array}{cccccc}
    &   & 0 & 1 & 2 & 3 \\
\hline
  2 & : & 4 & 6 & 4 & 1 \\
  3 & : & 5 & 11 & 8 & 2   \\
  4 & : & 1 & 2 & 1 & -
\end{array}
\]
Hence, the corner sequence and the corner values sequence of $I$ are
\[\C(I) = \{(3,3), (2,4)\},\,\,\,\textrm{and}\,\,\, a(I) = (2, 1),\]
respectively.
\end{ex}

We close this Section with some notations from \cite[Section 5]{CF4} that will be useful in the sequel.

Let $I$ be a squarefree stable ideal of $S$. If $I$ is generated in one degree $\ell$, then $I$ has a unique extremal Betti number $\beta_{m-\ell,\,m}(I)$, where $m= \max\{ \m(u)\,:\, u \in G(I)\}$.

Assume $I$ to be generated in degrees $1\leq \ell_1 < \ell_2 < \cdots < \ell_t \leq n$, and denote by $[t]$ the set $\{1, \ldots, t\}$.

Setting
\[m_{\ell_j} = \max\{\m(u)\,:\, u \in G(I)_{\ell_j}\},\]
for $j=1, \ldots, t$, let us consider the following sequence of non negative integers associated to $I$:
\begin{equation}\label{degseq1}
   \bd(I) =(m_{\ell_1}-\ell_1, m_{\ell_2} - \ell_2, \ldots, m_{\ell_t}-\ell_t).
\end{equation}
Such a sequence is called the \textit{degree-sequence} of $I$.

One can observe that, if
\begin{equation}\label{disdegree}
m_{\ell_1}-\ell_1 > m_{\ell_2}-\ell_2> \cdots > m_{\ell_t}-\ell_t,
\end{equation}
then, from Characterization \ref{Char}, $\beta_{m_{\ell_i}-\ell_i,\,m_{\ell_i}}(I)$ is an extremal Betti number of $I$, for $i=1, \ldots, t$. If (\ref{disdegree}) does not hold, one can construct a suitable subsequence of the \textit{degree-sequence} $\bd(I)$, say
\begin{equation}\label{subseq}
\widehat{\bd(I)}=(m_{\ell_{i_1}}-\ell_{i_1}, m_{\ell_{i_2}}-\ell_{i_2}, \ldots, m_{\ell_{i_q}}-\ell_{i_q}),
\end{equation}
with $\ell_1\leq \ell_{i_1} < \ell_{i_2}< \cdots <\ell_{i_q}= \ell_t$, and such that, for $j=1, \ldots, q$, \scalebox{0.96}{$\beta_{m_{\ell_{i_j}}-\ell_{i_j},\,m_{\ell_{i_j}}}(I)$} is an extremal Betti number of $I$.\\
The integer $q\le t$, denoted by $\bl(I)$, and called the \textit{degree-length} of $I$, gives the number of the extremal Betti numbers of the squarefree stable ideal $I$.\\
For more details on this subject see \cite{CF4}.

\section{Extremal Betti numbers of squarefree strongly stable ideals}\label{sec:1}
In this Section, we examine the extremal Betti numbers of squarefree strongly stable ideals in $S=K[x_1, \ldots, x_n]$. More precisely, we identify the admissible corner sequence of a squarefree strongly stable ideal in $S$.

From now on, we assume $\Mon{_\ell}^s(S)$ to be endowed with the squarefree lex order $>_{\slex}$ induced by $x_1>x_2> \cdots >x_n$.

At first, we analyze the simple cases occurring when $n=2, 3$.\\ 
{\bf Case 1.} Let $n=2$ and $S=K[x_1, x_2]$. A squarefree strongly stable ideal $I$ of $S$ can have at most one corner. 
More precisely, $\C(I) = \{(1,1)\}$ with $a(I) = (1)$, \emph{i.e.}, $I = (x_1, x_2)$.\\
{\bf Case 2.} Let $n=3$ and $S=K[x_1, x_2, x_3]$. Also in such a case, a squarefree strongly stable ideal $I$ of $S$ can have at most one corner $(k, \ell)$, $k+\ell \le 3$. Indeed, the only situations that may occur are listed in Table~\ref{tab:1}.

\begin{table}[H]
\[\begin{tabular}{|l|l|l|}
\hline
{\bf Corners} & {\bf Corner values} & {\bf Squarefree strongly stable ideal}\\
\hline
$\C(I) = \{(2,1)\}$  & $a(I) =(1)$ & $I=(x_1, x_2, x_3)$ \\
\hline
$\C(I) = \{(1,1)\}$ & $a(I) =(1)$ & $I=(x_1, x_2)$ \\
\hline
$\C(I) = \{(1,2)\}$ & $a(I)= (1)$ & $I=(x_1x_2, x_1x_3)$\\
\hline
$\C(I) = \{(1,2)\}$ & $a(I)= (2)$ & $I=(x_1x_2, x_1x_3, x_2x_3)$\\
\hline
\end{tabular}
\]
\caption{\label{tab:1}Corner sequences for $n=3$.}
\end{table}

Such easy cases allow us to yield the next result.

\begin{prop}\label{pro:onedegree}
Let $S=K[x_1,\ldots ,x_n]$, $n\ge 2$. If $I$ is a squarefree strongly stable ideal of $S$ with $(k,1)\in \C(I)$, then $\vert \C(I) \vert = 1$. More precisely, $I = (x_1, x_2, \ldots, x_{k+1})$. 
\end{prop}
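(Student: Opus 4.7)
The plan is to apply Characterization~\ref{Char} directly and then exploit the strong stability to force $I$ to be generated in degree one only.

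First, since $(k,1)\in \C(I)$, Characterization~\ref{Char} tells us that $k+1=\max\{\m(u):u\in G(I)_1\}$, so $x_{k+1}\in G(I)$. Applying the squarefree strongly stable property to $x_{k+1}$ (with $i=k+1$ and any $j<k+1$, which automatically lies outside $\supp(x_{k+1})=\{k+1\}$) gives $x_j=(x_j\cdot x_{k+1})/x_{k+1}\in I$ for each $j=1,\dots,k$. Being linear, each $x_j$ must belong to $G(I)$. The maximality of $k+1$ among indices of linear generators then forces $G(I)_1=\{x_1,\dots,x_{k+1}\}$.

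Next, I would suppose for contradiction that $G(I)_j\neq\emptyset$ for some $j\geq 2$, and pick $u\in G(I)_j$. Since $u$ is a \emph{minimal} generator, no variable $x_i\in I$ can divide $u$; in particular $\supp(u)\cap\{1,\dots,k+1\}=\emptyset$, so $\min(u)\geq k+2$. A squarefree monomial of degree $j$ whose smallest variable has index at least $k+2$ must satisfy $\m(u)\geq (k+2)+(j-1)=k+j+1$. On the other hand, the extremality clause of Characterization~\ref{Char} applied to $(k,1)$ demands $\m(u)<k+j$ for every $u\in G(I)_j$ with $j>1$. These two inequalities are incompatible, so $G(I)_j=\emptyset$ for all $j\geq 2$.

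Consequently $G(I)=\{x_1,\dots,x_{k+1}\}$, giving $I=(x_1,\dots,x_{k+1})$ and, by inspection, $\C(I)=\{(k,1)\}$. No real obstacle arises here: the proof is essentially a one-line deduction from Characterization~\ref{Char} combined with the elementary observation that if the linear part of a squarefree strongly stable ideal already contains $x_{k+1}$, then any higher-degree minimal generator would have to live entirely among variables $x_{k+2},\dots,x_n$, automatically producing too large an $\m$-value to be compatible with the extremality of $\beta_{k,k+1}(I)$.
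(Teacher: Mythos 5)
Your proof is correct and follows essentially the same route as the paper's brief proof: first observe that $G(I)_1 = \{x_1,\dots,x_{k+1}\}$, then derive a contradiction from the existence of a higher-degree minimal generator via the corner condition in Characterization~\ref{Char}. In fact you are somewhat more explicit than the paper (which merely states $\m(u)\geq k+2$), since you correctly derive the sharper bound $\m(u)\geq k+j+1$ for a degree-$j$ minimal generator, which is what actually contradicts $\m(u)<k+j$ in every degree $j\geq 2$.
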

\begin{proof} First of all one can observe that $G(I)_1 = \{x_1, \ldots, x_{k+1}\}$. If $G(I)_{\ge 2} \neq \emptyset$, then there exists a monomial $u\in G(I)$ of degree $\ell\ge 2$ such that $\m(u) \ge k+2$. A contradiction, since $(k, 1)$ is a corner of $I$.
\end{proof}

Now, let us consider the case $n=4$.\\
{\bf Case 3.} Let $n=4$ and $S=K[x_1, x_2, x_3, x_4]$. Assume $I$ to be a squarefree strongly stable ideal $S$ of initial degree $\ge 2$ (Proposition \ref{pro:onedegree}). Since a pair $(k, \ell ) \in \C(I)$ must satisfy the inequality $k+\ell \le 4$, the situations that can occur in such a case are described in Table~\ref{tab:2}.

\begin{table}[H]
\[
\scalebox{0.9}{
\begin{tabular}{|l|l|l|}
\hline
{\bf Corners} & {\bf Corner values} & {\bf Squarefree strongly stable ideal}\\
\hline
$\C(I) = \{(2,2), (1,3)\}$ & $a(I)$ =(1,1) & $I=(x_1x_2, x_1x_3, x_1x_4, x_2x_3x_4)$ \\
\hline
$\C(I)= \{(1,2)\}$ & $a(I)=(1)$ & $I=(x_1x_2, x_1x_3)$ \\
\hline
$\C(I) = \{(1,2)\}$ & $a(I)= (2)$ & $I=(x_1x_2, x_1x_3, x_2x_3)$\\
\hline
$\C(I) = \{(2,2)\}$ & $a(I)= (1)$ & $I=(x_1x_2, x_1x_3, x_1x_4)$\\
\hline
$\C(I) = \{(2,2)\}$ & $a(I)= (2)$ & $I=(x_1x_2, x_1x_3, x_1x_4, x_2x_3, x_2x_4)$\\
\hline
$\C(I) = \{(2,2)\}$ & $a(I)= (3)$ & $I=(x_1x_2, x_1x_3, x_1x_4, x_2x_3, x_2x_4, x_3x_4)$\\
\hline
$\C(I) = \{(1,3)\}$ & $a(I)= (1)$ & $I=(x_1x_2x_3, x_1x_2x_4)$\\
\hline
$\C(I) = \{(1,3)\}$ & $a(I)= (2)$ & $I=(x_1x_2x_3, x_1x_2x_4, x_1x_3x_4)$\\
\hline
$\C(I) = \{(1,3)\}$ & $a(I)= (3)$ & $I=(x_1x_2x_3, x_1x_2x_4, x_1x_3x_4, x_2x_3x_4)$\\
\hline
\end{tabular}
}\]
\caption{\label{tab:2} Corner sequences for $n=4$.}
\end{table}

\begin{rem}
All the squarefree strongly stable ideals described in Tables~\ref{tab:1} and \ref{tab:2} are the smallest strongly stable ideals with the given data, with respect to the inclusion relation.
\end{rem}

Let $T$ be a subset of $\Mon_d^s(S)$, $d <n$. 
The set of squarefree monomials of degree $d+1$ of $S$
\[
\Shad(T)=\{x_iu :\ u \in T,\ i\notin \supp(u),\ i=1, \ldots, n\}
\]
is called the \textit{squarefree} \textit{shadow} of $T$. Moreover, we define the $i$-th \emph{squarefree} \emph{shadow} recursively by $\Shad^i(T)$ $=$ $\Shad(\Shad^{i-1}(T))$, $i \ge 1$, with $\Shad^0(T)=$ $T$.

Next notion will be crucial for the further developments in this paper.

\begin{defn} \rm Let $u = x_{i_1} \cdots x_{i_q}$ be a squarefree monomial of $S$ of degree $q<n$. We say that $u$ has a $j\gap$ if $i_{j+1}-i_j>1$ for some $1\le j < q$.
The positive integer $i_{j+1}-i_j-1$ will be called the width of the $j\gap$. 
\end{defn}

The $j\gap$ of a squarefree monomial $u=x_{i_1} \cdots x_{i_q}\in S$ will be denoted by $j\gap(u)$, whereas its width will be denoted by $\wdt(j\gap(u))$.  Moreover, we define
\[\Gap(u):=\{j \in [q]: \, \textrm{there exists a}\;  j\gap(u)\}.\]

\begin{defn} \rm A squarefree monomial $u = x_{i_1} \cdots x_{i_q}$ of $S$ will be said gap--free if $\Gap(u)=\emptyset$.
\end{defn}

\begin{ex} Let $S = K[x_1, \ldots, x_{11}]$. The monomial $u=x_1x_3x_4x_6x_{10}\in S$ has three gaps. Indeed, $\Gap(u)=\{1,3,4\}$, $1\gap(u)$, $3\gap(u)$ have both width equal to $1$ and $4\gap(u)$ has width equal to  $3$;
on the contrary, the monomial $v=x_2x_3x_4x_5x_6\in S$ is gap--free. 
\end{ex}

\begin{lem} \label{lem:main} Let $u = x_{i_1} \cdots x_{i_q}$ be a squarefree monomial of degree $q< n-1$ of $S$. Assume $u$ has a gap whose width is $\ge 2$, or  $u$ has at least two gaps.

Then there exist at least two squarefree monomials $v, w\in S$ of degree $q+1$ with $v>_\slex w$, $\m(v) = \m(w)=n$ and such that 
\begin{enumerate}
\item[\rm{(i)}] $v$ is a multiple of $u$; 
\item[\rm{(ii)}] $w$ is not a multiple of $u$.
\end{enumerate}
\end{lem}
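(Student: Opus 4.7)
The plan is to construct $v$ and $w$ explicitly by perturbing $u$ at one (or two) of its gaps, splitting on whether $n\in\supp(u)$.

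First suppose $i_q<n$, so $n\notin\supp(u)$. Set $v=u\cdot x_n$, a squarefree multiple of $u$ of degree $q+1$ with $\m(v)=n$. Choose any $j_0\in\Gap(u)$ (one exists by hypothesis); since $i_{j_0}+1<i_{j_0+1}$, the variable $x_{i_{j_0}+1}$ does not divide $u$, so we may set
\[
w=\frac{u\cdot x_{i_{j_0}+1}\cdot x_n}{x_{i_{j_0}}}.
\]
Then $w$ is squarefree of degree $q+1$ with $\m(w)=n$ and $x_{i_{j_0}}\nmid w$, so $w$ is not a multiple of $u$. The ordered supports of $v$ and $w$ coincide on positions $1,\ldots,j_0-1$ and first differ at position $j_0$, where $v$ carries $i_{j_0}$ and $w$ carries $i_{j_0}+1$; thus $v>_\slex w$.

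Now suppose $i_q=n$. Here multiplication by $x_n$ is unavailable, and the two alternatives of the hypothesis enter separately. If $u$ has a gap of width $\ge 2$ at some position $j_0$, then both $i_{j_0}+1$ and $i_{j_0}+2$ lie outside $\supp(u)$, so set
\[
v=u\cdot x_{i_{j_0}+1},\qquad w=\frac{u\cdot x_{i_{j_0}+1}\cdot x_{i_{j_0}+2}}{x_{i_{j_0}}}.
\]
Otherwise $u$ has at least two gaps (necessarily of width $1$) at positions $j_0<j_1$, and we use both of them:
\[
v=u\cdot x_{i_{j_0}+1},\qquad w=\frac{u\cdot x_{i_{j_0}+1}\cdot x_{i_{j_1}+1}}{x_{i_{j_1}}}.
\]
In either sub-case one checks routinely that $v$ and $w$ are squarefree of degree $q+1$ with $\m(v)=\m(w)=n$ (every inserted or removed index is strictly less than $n=i_q$), that $v$ is a multiple of $u$, and that $w$ is not (the factor $x_{i_{j_0}}$ or $x_{i_{j_1}}$ has been deleted).

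The main technical point is verifying $v>_\slex w$ in the two-gap sub-case, where one must track how the insertion of $x_{i_{j_0}+1}$ shifts positions: after this insertion the index $i_{j_1}$ of $u$ occupies position $j_1+1$ in the ordered support of $v$, while the corresponding position of $w$ is occupied by $i_{j_1}+1$, and all earlier positions agree. This position-shift is exactly why the hypothesis bifurcates: with a single gap of width $1$ at $j_0$, the move $v=u\cdot x_{i_{j_0}+1}$ fills that gap completely, leaving no room inside the same gap to construct a non-multiple $w$ slex-below $v$; one therefore needs either a second gap to furnish a different index to swap out, or the first gap wide enough to accommodate a further non-member $i_{j_0}+2$.
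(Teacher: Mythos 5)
Your proof is correct and follows essentially the same two-step strategy as the paper: insert a variable into a gap of $u$ to form a multiple $v$, then shift an index to form a squarefree non-multiple $w$ that is slex-smaller and still has $\m(w)=n$. Your explicit bifurcation in the case $\m(u)=n$ (one gap of width $\ge 2$ versus two width-one gaps) is a welcome sharpening of the exposition: the paper instead builds $v$ by inserting $x_{i_{t+1}-1}$ at the top of the maximal gap and takes $w$ to be (a variant of) the greatest monomial following $v$ in slex order, invoking the gap hypothesis only in a closing one-line remark without spelling out which alternative is actually in play.
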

\begin{proof} If $\m(u)<n$, we can choose $v= ux_n = x_{i_1} \cdots x_{i_q}x_n$. 
Setting $t= \max \Gap(v)$, the greatest squarefree monomial following $v$ in the squarefree lex order is 
\[\tilde v=x_{i_1} \cdots x_{i_{t-1}}x_{i_t+1}\cdots x_{i_t+q-t+2}.\] 
If $i_t+q-t+2=n$, we choose $w=\tilde v$, otherwise, if $i_t+q-t+2 < n$, we choose $w = x_{i_1} \cdots x_{i_{t-1}} x_{i_t+1}\cdots x_{i_t+q-t+1}x_n$.
Finally, $v>_\slex w$, $u \mid v$ and $u \nmid w$. Note that $t\le q$.

Now, assume $\m(u)=n$. If $t= \max \Gap(u)$, let
\[v=  x_{i_1}\cdots x_{i_t}x_{i_{t+1}-1}x_{i_{t+1}} \cdots x_{i_{q-1}}x_{i_q} = x_{i_1}\cdots x_{i_t}x_{i_{t+1}-1}x_{i_{t+1}} \cdots x_{i_{q-1}}x_n.\]
Furthermore, if $p =  \max \Gap(v)$,  then the greatest squarefree monomial following $v$ in the squarefree lex order is  
\[
\tilde v=x_{i_1} \cdots x_{i_{p-1}}x_{i_p+1}\cdots x_{i_p+q-p+2}.\]
Hence, if $i_p+q-p+2= n$, we choose $w=\tilde v$, otherwise, if $i_p+q-p+2< n$, we choose 
$w$ $=$ $x_{i_1} \cdots x_{i_{p-1}}x_{i_p+1}\cdots x_{i_p+q-p+1}x_n$.

Note that the assumption on the gaps of the squarefree monomial $u$ assures us that we can construct both the monomials $v$ and $w$. 
\end{proof}

Next results easily follow.
\begin{prop} 
\label{prop:lungh2} Let $I$ be a squarefree strongly stable ideal of $S = K[x_1,\dots,x_n]$, $n\ge 4$, with initial degree $2$ and with a corner in degree $2$. Then
\begin{enumerate}
\item[\rm{(1)}] $I$ has at most $n-2$ corners for $n=4$;
\item[\rm{(2)}] $I$ has at most $n-3$ corners for $n\ge 5$.
\end{enumerate}
\end{prop}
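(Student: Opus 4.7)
The plan is to combine a simple count on the corner indices with a direct squarefree strong stability argument at the top of the chain of corners. Since the strict chain $k_1 > k_2 > \cdots > k_r \ge 1$ of integers forces $k_1 \ge r$, and the hypothesis of a corner in degree $2$ gives $\ell_1 = 2$ and hence $k_1 \le n - 2$ via $k_1 + \ell_1 \le n$, one immediately gets $r \le n - 2$. This already settles part (1).

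For part (2), with $n \ge 5$, I would proceed by contradiction: suppose $r = n - 2$. Combining $k_i \ge k_r + (r - i) \ge n - 1 - i$ with $k_i \le n - \ell_i \le n - i - 1$ pins down $k_i = n - 1 - i$ and $\ell_i = i + 1$, so every corner lies on the diagonal $k + \ell = n$. Characterization \ref{Char} then provides, for each $\ell \in \{2, \ldots, n-1\}$, a minimal generator $u_\ell \in G(I)_\ell$ with $\m(u_\ell) = n$.

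The key step is to identify $u_{n-1}$ uniquely. Writing $u_2 = x_a x_n$ and letting $p = \max\{a : x_a x_n \in G(I)\} \ge 1$, squarefree strong stability forces $x_1 x_n, \ldots, x_p x_n \in G(I)$. For any minimal generator $u = x_{i_1} \cdots x_{i_{\ell - 1}} x_n$ of degree $\ell \ge 3$ with $\m(u) = n$, minimality of $u$ requires $x_{i_k} x_n \notin I$, so $i_k > p$ for every $k$. Applied to $u_{n-1}$, this demands $n - 2$ distinct indices inside $\{p+1, \ldots, n-1\}$, a set of size $n - 1 - p$; this forces $p = 1$ and consequently $u_{n-1} = x_2 x_3 \cdots x_{n-1} x_n$.

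Finally, I would play $u_{n-1}$ against $u_{n-2}$. By the same minimality reasoning, $u_{n-2}$ has support $S_0 \cup \{n\}$ with $S_0 \subset \{2, \ldots, n-1\}$ and $|S_0| = n - 3$, so $S_0$ omits exactly one index $j \in \{2, \ldots, n-1\}$. The squarefree strong stability move $(x_j u_{n-2})/x_n$ lies in $I$ (since $j < n$ and $j \notin \supp(u_{n-2})$) and equals $x_2 x_3 \cdots x_{n-1}$, which is precisely $u_{n-1}/x_n$, a proper divisor of $u_{n-1}$. This contradicts $u_{n-1} \in G(I)$, so $r \ne n - 2$, proving $r \le n - 3$. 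The main obstacle is the uniqueness of $u_{n-1}$; once it is in hand, the move on $u_{n-2}$ delivers the forbidden divisor in a single line, and I do not expect to invoke Lemma \ref{lem:main} here, since the relevant top generators have too mild a gap structure to meet its hypothesis and the contradiction is extracted from the strong stability rule alone.
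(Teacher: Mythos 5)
Your proof is correct, and it takes a genuinely different route from the paper's. The paper's proof of part (1) simply cites the exhaustive case analysis of Case~3, and for part (2) it proceeds constructively: starting from $w_1 = x_1x_n$, it uses Lemma~\ref{lem:main} to generate a chain of basic monomials realizing the degree-sequence $(n-2,n-3,\ldots,2)$ of length $n-3$, and observes that the recursion terminates once a gap-free monomial is produced at degree $n-2$. Your argument instead proves the upper bound directly. The pigeonhole count $r \le k_1 \le n-2$ dispatches (1), and for (2) the assumption $r = n-2$ forces every corner onto the diagonal $k+\ell = n$, after which Characterization~\ref{Char} supplies generators $u_\ell \in G(I)_\ell$ with $\m(u_\ell) = n$ for all $\ell \in \{2,\ldots,n-1\}$. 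Your identification of $p = \max\{a : x_a x_n \in G(I)\}$, the strong-stability closure $x_1x_n,\ldots,x_px_n \in G(I)$, the minimality constraint $i_k > p$ on degree $\ge 3$ generators, the forced conclusion $p=1$ and $u_{n-1} = x_2\cdots x_n$, and finally the move $(x_j u_{n-2})/x_n = u_{n-1}/x_n \in I$ are all sound; and the hypothesis $n \ge 5$ enters exactly where it should, namely in ensuring that $u_{n-2}$ has degree $\ge 3$ so that the constraint $\supp(u_{n-2}) \setminus \{n\} \subset \{2,\ldots,n-1\}$ applies (for $n=4$ this breaks, since $u_{n-2} = u_2 = x_1x_n$). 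What each route buys: yours is a self-contained, direct derivation of the upper bound that does not invoke Lemma~\ref{lem:main} at all, whereas the paper's constructive route simultaneously produces explicit ideals witnessing that the bound $n-3$ is attained, which your contradiction argument does not provide.
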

\begin{proof} (1). It follows from Case 3. \\
(2). Let $n\ge 5$. An admissible degree--sequence of $I$ is the following one
\[\bd(I) = (n-2, n-3, \cdots, n-(n-2)=2).\]
Indeed, setting $w_1 = x_1x_n$, since $1\gap(w_1)$ has width $n-2$, then Lemma \ref{lem:main} assures that there exist at least $n-4$ squarefree monomials $w_2, \ldots, w_{n-3}$ in $S$ of degrees $3, \ldots, n-2$, respectively, with 
$\m(w_i)=n$, and $n-4$ squarefree monomials $v_2, \ldots, v_{n-3}$ of degrees $3, \ldots, n-2$, respectively, with $\m(v_i) = n$ and such that $v_i >_\slex w_i$, $w_{i-1}\mid v_i$, $v_i\nmid w_i$, for $i=2, \ldots, n-3$. Using the same techniques as in Lemma \ref{lem:main}, one can easily verify that $w_i \nmid w_{i+1}$ ($i=1, \ldots, n-4$). 

\begin{center}
\begin{tikzpicture}
  \foreach \x in {0,1,2}{
    \draw[thick,->] (\x*2+\x,2)--(\x*2+\x+1.1,1);
    \draw[thick,->] (\x*2+\x+1.5,1)--(\x*2+\x+2.6,2);       
    \draw[thick,dotted,->] (\x*2+\x+0.2,2.2)--(\x*2+\x+1.2,2.2);
    \draw[thick,dotted,->] (\x*2+\x+1.7,0.8)--(\x*2+\x+2.8,0.8);
    \pgfmathtruncatemacro\y{\x*2+1}    
    \draw (\x*2+\x-0.2,2.2)node{$w_\y$};
    \pgfmathtruncatemacro\y{\x*2+2}
    \draw (\x*2+\x+1.5,2.2)node{$v_\y$};
    \draw (\x*2+\x+1.4,0.8)node{$w_\y$};
    \pgfmathtruncatemacro\y{\x*2+3}
    \draw (\x*2+\x+3.1,0.8)node{$v_\y$};
  }
  \draw (8.9,2.2)node{$\ldots$};
\end{tikzpicture}
\end{center}
The monomials $w_i$ ($i=1, \ldots, n-3$) will be called \emph{basic monomials}. 

Next tables list the basic monomials for $n=5,\ldots, 9$. For $n\ge 10$, the construction of such elements proceeds smoothly.  
\begin{figure}[H]
\centering
\subfloat[]{
$
\begin{array}{|cc|}
\hline
\multicolumn{2}{|c|}{\mathbf{n=5}}\\
\hline
 v_i & w_i \\
\hline
             & \mathbf{x_1x_5} \\
  x_1x_4x_5  & \mathbf{x_2x_3x_5} \\
  x_2x_3x_4x_5  & \mathbf{-} \\
  & \\
  \hline
\end{array}
$}
\hspace{.5cm}
\subfloat[]{
$
\begin{array}{|cc|}
\hline
\multicolumn{2}{|c|}{\mathbf{n=6}}\\
\hline
 v_i & w_i \\
\hline
             & \mathbf{x_1x_6} \\
  x_1x_5x_6  & \mathbf{x_2x_3x_6} \\
  x_2x_3x_5x_6  & \mathbf{x_2x_4x_5x_6} \\
  x_2x_3x_4x_5x_6  & \mathbf{-}\\
\hline
\end{array}
$}
\hspace{.5cm}
\subfloat[]{
$
\begin{array}{|cc|}
\hline
\multicolumn{2}{|c|}{\mathbf{n=7}}\\
\hline
 v_i & w_i \\
\hline
             & \mathbf{x_1x_7} \\
  x_1x_6x_7  & \mathbf{x_2x_3x_7} \\
  x_2x_3x_6x_7  & \mathbf{x_2x_4x_5x_7} \\
  x_2x_4x_5x_6x_7  & \mathbf{x_3x_4x_5x_6x_7}\\
\hline
\end{array}
$}
\hspace{.5cm}
\subfloat[]{
$
\begin{array}{|cc|}
\hline
\multicolumn{2}{|c|}{\mathbf{n=8}}\\
\hline
 v_i & w_i \\
\hline
             & \mathbf{x_1x_8} \\
  x_1x_7x_8  & \mathbf{x_2x_3x_8} \\
  x_2x_3x_7x_8  & \mathbf{x_2x_4x_5x_8} \\
  x_2x_4x_5x_7x_8  & \mathbf{x_2x_4x_6x_7x_8}\\
  x_2x_4x_5x_6x_7x_8  & \mathbf{x_3x_4x_5x_6x_7x_8}\\
  & \\
  \hline
\end{array}
$}
\hspace{.5cm}
\subfloat[]{
$
\begin{array}{|cc|}
\hline
\multicolumn{2}{|c|}{\mathbf{n=9}}\\
\hline
 v_i & w_i \\
\hline
             & \mathbf{x_1x_9} \\
  x_1x_8x_9  & \mathbf{x_2x_3x_9} \\
  x_2x_3x_8x_9  & \mathbf{x_2x_4x_5x_9} \\
  x_2x_4x_5x_8x_9  & \mathbf{x_2x_4x_6x_7x_9}\\
  x_2x_4x_6x_7x_8x_9  & \mathbf{x_2x_5x_6x_7x_8x_9}\\
  x_2x_4x_5x_6x_7x_8x_9  & \mathbf{x_3x_4x_5x_6x_7x_8x_9}\\
\hline  
\end{array}
$}
\end{figure}

Note that the construction of the basic elements ends up as soon as one gets a gap--free monomial.
\end{proof}

\begin{ex} \label{expl:8} Let $S= K[x_1, x_2, x_3, x_4,x_5, x_6, x_7, x_8]$, and let
\begin{align*}
I = ( & x_1x_2, x_1x_3, x_1x_4, x_1x_5, x_1x_6, x_1x_7, x_1x_8, x_2x_3x_4, x_2x_3x_5, x_2x_3x_6, x_2x_3x_7,\\
& x_2x_3x_8, x_2x_4x_5x_6, x_2x_4x_5x_7, x_2x_4x_5x_8, x_2x_4x_6x_7x_8, x_3x_4x_5x_6x_7x_8)
\end{align*}
be a squarefree strongly stable ideal of $S$.
The \textit{degree-sequence} of $I$ is
\begin{equation*}
\bd(I)=(m_2-2,m_3-3,m_4-4,m_5-5,m_6-6)=(6,5,4,3,2).
\end{equation*}
$I$ has initial degree $2$ and $\bl(I)= 5$. The extremal Betti numbers of $I$ are\\ $\beta_{8-2,8}(I)=$ $\beta_{8-3,8}(I)$ $=\beta_{8-4,8}(I)=$ $\beta_{8-5,8}(I)=$ $\beta_{8-6,8}(I)=1$, as the Betti table of $I$ shows:
\[
\begin{array}{ccccccccc}
    &   & 0 & 1 & 2 & 3 & 4 & 5 & 6\\
\hline
  2 & : & 7 & 21 & 35 & 35 & 21 & 7 & 1 \\
  3 & : & 5 & 15 & 20 & 15 & 6 & 1 & - \\
  4 & : & 3 & 9 & 10 & 5 & 1 & - & - \\
  5 & : & 1 & 3 & 3 & 1 & - & - & - \\
  6 & : & 1 & 2 & 1 & - & - & - & - 
\end{array}
\]
\end{ex}

\begin{prop}
\label{prop:lungh3} Let $n\geq 5$ and let $I$ be a squarefree strongly stable ideal of $S=K[x_1,\dots,$ $x_n]$ with initial degree $\ell \geq 3$ and with a corner in degree $\ell$.
Then $I$ has at most $n-\ell$ corners.
\end{prop}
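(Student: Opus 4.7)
The plan is to derive the bound directly from the two elementary numerical constraints that any corner sequence of a squarefree ideal of $S$ must obey: the \emph{squarefree ceiling} $k_i+\ell_i\le n$ and the strict monotonicity of the degrees in which corners appear.

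Write $\C(I)=\{(k_1,\ell_1),\ldots,(k_r,\ell_r)\}$ with $k_1>k_2>\cdots>k_r\ge 1$ and $\ell_1<\ell_2<\cdots<\ell_r$. Because $\indeg I=\ell$ and $I$ has a corner in degree $\ell$, corners can only occur in degrees where generators live, so $\ell_1=\ell$. For each $i$, Characterization \ref{Char} produces a generator $u_i\in G(I)_{\ell_i}$ with $\m(u_i)=k_i+\ell_i$; since $u_i$ is a squarefree monomial in $K[x_1,\ldots,x_n]$, we have $\m(u_i)\le n$, hence $k_i+\ell_i\le n$. Combining this with the strict growth $\ell_i\ge \ell+(i-1)$ and $k_i\ge 1$, specialised to $i=r$, yields
\[
\ell+(r-1)+1 \;\le\; \ell_r + k_r \;\le\; n,
\]
that is, $r\le n-\ell$, which is the desired bound.

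The main obstacle is essentially nonexistent for this upper bound: once one has Characterization \ref{Char} in hand, the estimate is a direct counting consequence of the squarefree ceiling together with the fact that the $\ell_i$'s form a strictly increasing sequence of positive integers starting from $\ell$. The subtle point — which explains the asymmetry with Proposition \ref{prop:lungh2} — is that for $\ell=2$ additional structural obstructions of Lemma-\ref{lem:main} type cut the bound by one (from $n-2$ down to $n-3$), whereas for $\ell\ge 3$ no sharper constraint than the squarefree ceiling appears. The separate question of realisability (that $n-\ell$ is actually attained) would require a construction of \emph{basic monomials} entirely analogous to the one carried out in Proposition \ref{prop:lungh2} via Lemma \ref{lem:main}, but this is not required by the present statement.
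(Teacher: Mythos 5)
Your proof is correct, and it takes a genuinely different route from the paper. You derive the bound purely numerically: from $\ell_1=\ell$, the strict growth $\ell_i\ge\ell+(i-1)$, the squarefree ceiling $k_i+\ell_i\le n$ (coming from $\m(u_i)\le n$), and $k_r\ge 1$, you obtain $r\le n-\ell$ directly. The paper's proof is instead constructive: it builds, via the basic monomials supplied by Lemma~\ref{lem:main}, an explicit squarefree lex ideal whose degree--sequence is $\bd(I)=(n-\ell,n-\ell-1,\ldots,1)$ with degree--length exactly $n-\ell$, thereby demonstrating that the bound is attained. Read strictly, the paper's argument establishes sharpness while treating the upper bound as implicit, whereas your counting argument supplies the ``at most'' direction explicitly --- which is precisely what the statement claims. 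The one assumption you use without comment, $k_r\ge 1$, is the standing convention for corner sequences throughout the paper, and for squarefree strongly stable ideals it also follows from the structure: since $\indeg I=\ell$ and $I$ is strongly stable, repeated variable exchanges give $x_1\cdots x_\ell\in G(I)_\ell$, so $x_1\cdots x_{\ell_r}$ with $\ell_r>\ell$ cannot be a minimal generator, ruling out a corner with $k_r=0$. Your aside on the $\ell=2$ asymmetry is also well taken: there the same counting gives only $r\le n-2$, and the tightening to $n-3$ comes from the further gap/shadow obstruction encoded in Lemma~\ref{lem:main} --- once the basic monomial becomes gap--free in degree $n-2$, its shadow swallows every degree--$(n-1)$ squarefree monomial with $\m=n$, so no additional corner can appear.
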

\begin{proof} Using the same reasoning as in Proposition \ref{prop:lungh2}, an admissible 
degree--sequence of $I$ is the following one:
\[\bd(I) = (n-\ell, n-(\ell+1), \cdots, n-(n-1)=1),\]
with $\bl(I)=n-\ell$.

Next tables show the basic monomials for $n=5, \ldots, 8$ and $\ell =3$. For $n\ge 8$ ($\ell =3$), the construction of such elements proceeds smoothly.
\begin{figure}[H]
\centering
\subfloat[]{
$
\begin{array}{|cc|}
\hline
\multicolumn{2}{|c|}{\mathbf{n=5}}\\
\hline
 v_i & w_i \\
\hline
             & \mathbf{x_1x_2x_5} \\
  x_1x_2x_4x_5  & \mathbf{x_1x_3x_4x_5} \\
  x_1x_2x_3x_4x_5  & \mathbf{-} \\
  & \\
  \hline
\end{array}
$}
\hspace{.5cm}
\subfloat[]{
$
\begin{array}{|cc|}
\hline
\multicolumn{2}{|c|}{\mathbf{n=6}}\\
\hline
 v_i & w_i \\
\hline
             & \mathbf{x_1x_2x_6} \\
  x_1x_2x_5x_6  & \mathbf{x_1x_3x_4x_6} \\
  x_1x_3x_4x_5x_6  & \mathbf{x_2x_3x_4x_5x_6} \\
  x_1x_2x_3x_4x_5x_6  & \mathbf{-}\\
\hline
\end{array}
$}
\end{figure}
\begin{figure}{H}
\centering
\subfloat[]{
$
\begin{array}{|cc|}
\hline
\multicolumn{2}{|c|}{\mathbf{n=7}}\\
\hline
 v_i & w_i \\
\hline
             & \mathbf{x_1x_2x_7} \\
  x_1x_2x_6x_7  & \mathbf{x_1x_3x_4x_7} \\
  x_1x_3x_4x_6x_7  & \mathbf{x_1x_3x_5x_6x_7} \\
  x_1x_3x_4x_5x_6x_7  & \mathbf{x_2x_3x_4x_5x_6x_7}\\
  & \\
\hline
\end{array}
$}
\hspace{.5cm}
\subfloat[]{
$
\begin{array}{|cc|}
\hline
\multicolumn{2}{|c|}{\mathbf{n=8}}\\
\hline
 v_i & w_i \\
\hline
             & \mathbf{x_1x_2x_8} \\
  x_1x_2x_7x_8  & \mathbf{x_1x_3x_4x_8} \\
  x_1x_3x_4x_7x_8  & \mathbf{x_1x_3x_5x_6x_8} \\
  x_1x_3x_5x_6x_7x_8  & \mathbf{x_1x_4x_5x_6x_7x_8}\\
  x_1x_3x_4x_5x_6x_7x_8  & \mathbf{x_2x_3x_4x_5x_6x_7x_8}\\
  \hline
\end{array}
$}
\end{figure}
Also in this case, the construction of the basic elements ends up as soon as one gets a gap--free monomial.
\end{proof}

\begin{ex} Let $S= K[x_1, x_2, x_3, x_4,x_5, x_6, x_7, x_8]$ and let
\begin{align*}
I = ( & x_1x_2x_3, x_1x_2x_4, x_1x_2x_5, x_1x_2x_6, x_1x_2x_7, x_1x_2x_8, x_1x_3x_4x_5, x_1x_3x_4x_6,\\
& x_1x_3x_4x_7, x_1x_3x_4x_8, x_1x_3x_5x_6x_7, x_1x_3x_5x_6x_8, x_1x_4x_5x_6x_7x_8,\\ & x_2x_3x_4x_5x_6x_7x_8)
\end{align*}
be a squarefree strongly stable ideal of $S$ initial degree $3$.
The \textit{degree-sequence} of $I$ is
\begin{equation*}
    \bd(I)=(m_2-3,m_3-4,m_4-5,m_5-6,m_6-7)=(5,4,3,2,1).
\end{equation*}

\noindent The extremal Betti numbers of $I$ are $\beta_{8-3,8}(I)=$ $\beta_{8-4,8}(I)$ $=$ $\beta_{8-5,8}(I)=$ $\beta_{8-6,8}(I)$ $=$ $\beta_{8-7,8}(I)=1$, as the Betti table of $I$ shows
\[
\begin{array}{cccccccc}
    &   & 0 & 1 & 2 & 3 & 4 & 5\\
\hline
  3 & : & 6 & 15 & 20 & 15 & 6 & 1 \\
  4 & : & 4 & 10 & 10 & 5 & 1 & - \\
  5 & : & 2 & 5 & 4 & 1 & - & - \\
  6 & : & 1 & 2 & 1 & - & - & - \\
  7 & : & 1 & 1 & - & - & - & - 
\end{array}
\]
\end{ex}

The next example considers a squarefree monomial ideal $I$ of $S$ without a corner in its initial degree, and shows the construction of a squarefree monomial ideal $J$ of $S$ with a corner in its initial degree and with the same extremal Betti numbers (positions and values) of $I$.

\begin{ex}  Consider the following monomial ideal $I$ of $S= K[x_1,\dots,x_5]$:
\[
I=(x_1x_2, x_1x_3x_4, x_1x_3x_5, x_2x_3x_4x_5).
\]
$I$ is squarefree strongly stable of initial degree $2$ and with $\C(I)$ $=$ $\{(2,3),$ $(1,4)\}$. From the Betti table of $I$, one can note that there is no corner in its initial degree:

\begin{figure}[H]
\centering
\subfloat[]{
$
\begin{array}{ccccc}
    &   & 0 & 1 & 2\\
\hline
  2 & : & 1 & - & -\\
  3 & : & 2 & 3 & 1\\
  4 & : & 1 & 1 & -\\
\end{array}
$}
\caption{Betti Table of $I$}
\end{figure}

Furthermore, we can construct a squarefree strongly stable ideal $J$ in $S$ with initial degree $3$ and $\C(J)=\{(2,3),(1,4)\}$. It is
\[
J=(x_1x_2x_3, x_1x_2x_4, x_1x_2x_5, x_1x_3x_4x_5).
\]

Note that $J$ is the smallest squarefree strongly stable ideal of $S$ with corner sequence $\{(2,3),(1,4)\}$:
\begin{figure}[H]
\centering
\subfloat[]{
$
\begin{array}{ccccc}
    &   & 0 & 1 & 2\\
\hline
  3 & : & 3 & 3 & 1\\
  4 & : & 1 & 1 & -\\
  & & & &
\end{array}
$}
\vspace{-.5cm} 
\caption{Betti Table of $J$}
\end{figure}

\end{ex}

\begin{rem}  It is worthy to point out that a squarefree strongly stable ideal $I$ of $S=K[x_1, \ldots, x_n]$ ($n\ge 5$) of initial degree $\ell\ge 2$ with a corner in degree $\ell$ and such that
\begin{align*}
\bd(I)&= (n-2, n-3, \ldots, 2), \,\, \textrm{for} \,\, \ell=2,\\[5pt]
\bd(I)&= (n-\ell, n-\ell-1, \ldots, 1), \,\, \textrm{for} \,\, \ell \ge 3
\end{align*}
is a squarefree lex ideal of $S$.

Hence, one can observe that a squarefree lex ideal of the polynomial ring $S$ of initial degree $\ell\ge 2$ and with a corner in degree $\ell$ can have at most $n-\ell$ corners unlike the non--squarefree case. Indeed, a lex ideal $I$ of a polynomial ring can have at most $2$ corners \cite[Theorem 3.2]{CU1} (see also \cite[Proposition 2.1]{CU2}).
\end{rem}

For $u, v \in \Mon_d^s(S)$, $u \ge_\slex v$, let us define the following set of squarefree monomials:
\[\mathcal{L}(u, v) = \{z \in \Mon_d^s(S) : u \ge_\slex z \ge_\slex v \}.\]

\begin{thm}
\label{thm:lex}  Let $n \geq 5$ and $\ell_1\ge 3$ two integers.  Given $n-\ell_1$ pairs of positive integers 
\begin{equation}\label{eq:pairs}
(k_1, \ell_1), (k_2, \ell_2), \ldots, (k_{n-\ell_1}, \ell_{n-\ell_1}),
\end{equation}
with $1\leq k_{n-\ell_1} < k_{n-\ell_1-1} < \cdots < k_1\leq n-3$ and $3 \leq \ell_1 < \ell_2 < \cdots < \ell_{n-\ell_1}\leq n-1$, 
then there exists a squarefree lex ideal $I$ of $S$ of initial degree $\ell_1$ and with the pairs in (\ref{eq:pairs}) as corners if and only if $k_i+\ell_i=n$, for $i=1,\ldots, n-\ell_1$. 
\end{thm}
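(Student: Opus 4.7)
The plan is to treat the two directions of this \emph{iff} separately, with the necessity being a pigeonhole count and the sufficiency requiring an explicit construction.

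For the necessity $(\Rightarrow)$, I would start by invoking Characterization~\ref{Char}: since every squarefree lex ideal is squarefree strongly stable, each corner $(k_i,\ell_i)$ satisfies $k_i+\ell_i=\max\{\m(u):u\in G(I)_{\ell_i}\}\le n$. Next I would observe that the $n-\ell_1$ strictly increasing integers $\ell_i$ lie in the interval $[\ell_1,n-1]$, which contains exactly $n-\ell_1$ integers; hence $\ell_i=\ell_1+i-1$. Combining $k_i\le n-\ell_i=n-\ell_1-i+1$ with the fact that the $k_i$ are strictly decreasing positive integers (which forces $k_i\ge n-\ell_1-i+1$) pins down $k_i=n-\ell_i$, so $k_i+\ell_i=n$.

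For the sufficiency $(\Leftarrow)$, given $k_i+\ell_i=n$ I would build $I$ explicitly from the basic monomials $w_1,\dots,w_{n-\ell_1}$ produced in the proof of Proposition~\ref{prop:lungh3}, which satisfy $\deg w_i=\ell_i$ and $\m(w_i)=n$. I would take $I$ to be the squarefree monomial ideal whose squarefree part in each degree $d\in\{\ell_1,\dots,n-1\}$ is the squarefree lex-segment
\[
L_d=\{u\in\Mon_d^s(S):u\ge_{\slex}w_{d-\ell_1+1}\},
\]
with $I_d=0$ for $d<\ell_1$. The minimal generators are then read off as $G(I)_d=L_d\setminus\Shad(L_{d-1})$, giving the promised explicit description. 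Once $I$ is shown to be an ideal, Characterization~\ref{Char} together with $\m(w_i)=n$ forces $(n-\ell_i,\ell_i)$ to be a corner, and the strict monotonicity of $\bd(I)=(n-\ell_1,n-\ell_1-1,\dots,1)$ prevents any additional corners, as per the discussion following~(\ref{disdegree}).

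The main obstacle is verifying that the prescribed lex-segments are actually compatible, i.e., $\Shad(L_d)\subseteq L_{d+1}$ for every $d\in\{\ell_1,\dots,n-2\}$, which is exactly what makes $I$ a (squarefree lex) ideal. Since the squarefree shadow of a squarefree lex-segment is again a squarefree lex-segment~\cite{AHH2}, this reduces to the single inequality $\min\Shad(L_d)\ge_{\slex}w_{d-\ell_1+2}$, and this is precisely what Lemma~\ref{lem:main} was engineered to deliver: the auxiliary monomial $v_{i+1}$ constructed there is a multiple of $w_i$ lying in $\Shad(L_d)$ and satisfies $v_{i+1}>_{\slex}w_{i+1}$, so no shadow element can drop strictly below $w_{i+1}$ in the squarefree lex order. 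Making this bookkeeping airtight for the whole chain of degrees, by tracking the gap structure of each $w_i$, is the technical heart of the argument.
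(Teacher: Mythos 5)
Your necessity direction is sound and is in fact more explicit than the paper's: the paper simply cites Proposition~\ref{prop:lungh3}, whereas you correctly pin down $\ell_i=\ell_1+i-1$ (pigeonhole on $n-\ell_1$ strictly increasing integers in $[\ell_1,n-1]$) and then squeeze $k_i=n-\ell_i$ between the strict monotonicity of the $k_i$ and the constraint $k_i+\ell_i\le n$ from Characterization~\ref{Char}. That part is fine.

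The sufficiency direction is where there is a genuine gap. Your reduction to showing $\Shad(L_d)\subseteq L_{d+1}$, i.e.\ $\min\Shad(L_d)\ge_{\slex}w_{d-\ell_1+2}$, is the right target (granting the squarefree Kruskal--Katona shadow property from \cite{AHH2}). But the justification you give — that $v_{i+1}$ is a multiple of $w_i$ lying in $\Shad(L_d)$ and $v_{i+1}>_{\slex}w_{i+1}$, ``so no shadow element can drop strictly below $w_{i+1}$'' — is a non sequitur. Exhibiting \emph{one} element of $\Shad(L_d)$ that lies above $w_{i+1}$ says nothing about the minimum of $\Shad(L_d)$; you would need to know that $v_{i+1}$ \emph{is} that minimum. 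This requires two facts, neither of which is stated in Lemma~\ref{lem:main} or elsewhere in the paper: (a) that the $v$ produced by Lemma~\ref{lem:main} equals $\min\Shad(\{u\})$ (this does follow from the construction there — it inserts the largest available index — but the lemma does not assert it), and (b) that for a squarefree lex segment $L$ one has $\min\Shad(L)=\min\Shad(\{\min L\})$. Without (a) and (b), the chain $\Shad(L_d)\subseteq L_{d+1}$ is not established and $I$ is not even shown to be an ideal. The paper avoids this issue entirely by a direct degree-by-degree construction: it defines $G(I)_{\ell_i}=\mathcal{L}(u_i,v_i)$ for $i\le s$, then singletons $\{u_{s+q}\}$ beyond, explicitly splitting into the cases $\ell_1+2s-3=n-2$ and $\ell_1+2s-3=n-3$ and hand-computing the smallest shadow element at each step. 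Your lex-segment formulation is cleaner in spirit, and in fact reproduces the same ideal, but to make it rigorous you must supply the missing monotonicity argument for $\min\Shad$ of a lex segment rather than gesture at Lemma~\ref{lem:main}.
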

\begin{proof} Set $S = K[x_1,\dots,x_n]$. If there exists a squarefree lex ideal $I$ of $S$ of initial degree $\ell_1$ and with the pairs in (\ref{eq:pairs}) as corners, then Proposition \ref{prop:lungh3} forces that $k_i+\ell_i=n$, for $i=1,\ldots, n-\ell_1$. 

Conversely, assume there exist $n-\ell_1$ pairs of positive integers 
\begin{equation}
(k_1, \ell_1), (k_2, \ell_2), \ldots, (k_{n-\ell_1}, \ell_{n-\ell_1}),
\end{equation}
with $1\leq k_{n-\ell_1} < k_{n-\ell_1-1} < \cdots < k_1\leq n-3$, $3 \leq \ell_1 < \ell_2 < \cdots < \ell_{n-\ell_1}\leq n-1$ and $k_i+\ell_i=n$, for $i=1,\ldots, n-\ell_1$.\\ 
We prove that there exists a squarefree lex ideal $I$ of $S$ generated in degrees $\ell_1, \ell_2, \ldots, \ell_{n-\ell_1}$ with $\C(I) = \{(k_1,\ell_1), \ldots, (k_{n-\ell_1}, \ell_{n-\ell_1})\}$.

Setting $s = \max\{i: \ell_1+2i-3 \le n-2\}$, the required monomial ideal $I$ can be constructed as follows.\\
{\bf Step 1.} For $i=1, \ldots, s$, let
\begin{enumerate}
\item[-] $G(I)_{\ell_1} = \mathcal{L}\left(u_1, v_1\right)$, with $u_1= x_1x_2\cdots x_{\ell_1}$ and $v_1 = x_1x_2\cdots x_{\ell_1-1}x_n$;
\item[-] $G(I)_{\ell_i} = G(I)_{\ell_1+i-1}= \mathcal{L}\left(u_i, v_i\right)$, with
\begin{align*}
u_i &=x_1x_2\cdots x_{\ell_1-2}\prod_{j=0}^{i-2}x_{\ell_1+2j}x_{\ell_1+2(i-2)+1}x_{\ell_1+2(i-2)+2} \\
       &=x_1x_2\cdots x_{\ell_1-2}\prod_{j=0}^{i-2}x_{\ell_1+2j}x_{\ell_1+2i-3}x_{\ell_1+2i-2}
\end{align*}
and

\[
v_i =x_1\cdots x_{\ell_1-2}\prod_{j=0}^{i-2}x_{\ell_1+2j}x_{\ell_1+2(i-2)+1}x_n =
       x_1\cdots x_{\ell_1-2}\prod_{j=0}^{i-2}x_{\ell_1+2j}x_{\ell_1+2i-3}x_n.
\]
\end{enumerate}
\vspace{0,3cm}
{\bf Step 2.} Let us consider the squarefree monomial 
\[v_s = x_1x_2\cdots x_{\ell_1-2}\prod_{j=0}^{s-2}x_{\ell_1+2j}x_{\ell_1+2s-3}x_n.\]
Since, $\ell_1+2s-3 \le n-2 $, the smallest monomial belonging to the $\Shad(G(I)_{\ell_s})$ is
\[w_{s+1} = x_1x_2\cdots x_{\ell_1-2}\prod_{j=0}^{s-2}x_{\ell_1+2j}x_{\ell_1+2s-3}x_{n-1}x_n.\]
We distinguish two cases: $\ell_1+2s-3 = n-2$, and $\ell_1+2s-3 < n-2$.\\
{\bf Claim 1}. If $\ell_1+2s-3 < n-2$, then $\ell_1+2s-3 = n-3$.\\
Indeed, by the meaning of $s$, $\ell_1+2(s+1)-3 \ge n-1$. Hence, 
$\ell_1+2s-3 \ge n-3$ and
\[n-3 \le \ell_1+2s-3  < n-2\]
and consequently $\ell_1+2s-3 =n-3$. The claim follows.\\

Let us consider $\ell_1+2s-3 = \ell_1+2(s-2)+1= n-2$. In such a case,
\begin{align*}
w_{s+1} &= x_1\cdots x_{\ell_1-2}\prod_{j=0}^{s-2}x_{\ell_1+2j}x_{\ell_1+2(s-2)+1}x_{n-1}x_n\\
 &= x_1\cdots x_{\ell_1-2}\prod_{j=0}^{s-3}x_{\ell_1+2j}x_{n-3}x_{n-2}x_{n-1}x_n.
\end{align*}
Hence, the greatest squarefree monomial of $S$ following $w_{s+1}$ is
\[u_{s+1} =x_1x_2\cdots x_{\ell_1-2}\prod_{j=0}^{s-4}x_{\ell_1+2j}x_{\ell_1+2(s-3)+1}x_{\ell_1+2(s-3)+2}\cdots x_{\ell_1+2(s-3)+5}.\]
Note that $\m(u_{s+1}) = \ell_1+2(s-3)+5 = \ell_1+2s-3+2= n-2+2 =n$, whereupon we choose
\[G(I)_{\ell_{s+1}} =  \{u_{s+1}\}.\]
The smallest squarefree monomial belonging to $\Shad(G(I)_{\ell_{s+1}})$ is
\begin{align*}
w_{s+2} &= x_1x_2\cdots x_{\ell_1-2}\prod_{j=0}^{s-4}x_{\ell_1+2j}x_{\ell_1+2(s-3)}x_{\ell_1+2(s-3)+1}x_{\ell_1+2(s-3)+2}\cdots x_{\ell_1+2(s-3)+5}\\
&=x_1x_2\cdots x_{\ell_1-2}\prod_{j=0}^{s-4}x_{\ell_1+2j}x_{n-5}x_{n-4}x_{n-3}x_{n-2}x_{n-1}x_n.
\end{align*}
Therefore, the greatest squarefree monomial of $S$ following $w_{s+2}$ is
\[u_{s+2} =x_1x_2\cdots x_{\ell_1-2}\prod_{j=0}^{s-5}x_{\ell_1+2j}x_{\ell_1+2(s-4)+1}x_{\ell_1+2(s-4)+2}\cdots x_{\ell_1+2(s-4)+7}.\]
Note that $\m(u_{s+2}) = \ell_1+2(s-4)+7 = \ell_1+2s-3+2= n-2+2 =n$. Thus, we choose
\[G(I)_{\ell_{s+2}} =  \{u_{s+2}\},\]
and so on. In general, 
\[G(I)_{\ell_{s+q}} =  \{u_{s+q}\},\] 
with
\begin{equation*}
\resizebox{\hsize}{!}{$\displaystyle
u_{s+q} =x_1x_2\cdots x_{\ell_1-2}\prod_{j=0}^{s-2-(q+1)}x_{\ell_1+2j}x_{\ell_1+2(s-2-q)+1}x_{\ell_1+2(s-2-q)+2}\cdots x_{\ell_1+2(s-2-q)+2q+3},$}
\end{equation*}
for $q=1,\ldots, t$, where $t$ is the positive integer such that $s-2-(t+1)=0$. It is easy to verify that $\m(u_{s+q}) = n$.\\
{\bf Claim 2.} $s+t = n-\ell_1-2$.\\
Since, $\m(u_{s+t}) = n$, and $t +1= s-2$ ($t = s-3$), then 
\[n= \ell_1+2(s-2-t)+2t+3 = \ell_1+2(t+1-t)+2t+3 = \ell_1+2t+5.\]
Hence,
\[n-\ell_1-2 = \ell_1+2t+5 -\ell_1-2 =2t+3 = 2s-3 = s+t.\]
The claim follows.

Finally, we choose
\[
G(I)_{\ell_{n-\ell_1-1}} = G(I)_{s+t+1}=  \{u_{s+t+1}\} =\{x_1x_2\cdots x_{\ell_1-2}x_{\ell_1+1}\cdots x_{n}\},\]
\[
G(I)_{\ell_{n-\ell_1}} = G(I)_{s+t+2} =  \{u_{s+t+2}\} = \{x_1x_2\cdots x_{\ell_1-3}x_{\ell_1-1}x_{\ell_1}\cdots x_{n}\}.
\]
\par\bigskip\bigskip

Now, let us consider the case $\ell_1+2s-3 =n-3$. 
In such a case, the smallest monomial belonging to $\Shad(G(I)_{\ell_s})$ is
\begin{align*}
w_{s+1} &=x_1x_2\cdots x_{\ell_1-2}\prod_{j=0}^{s-2}x_{\ell_1+2j}x_{\ell_1+2(s-2)+1}x_{n-1}x_n\\
       &=x_1x_2\cdots x_{\ell_1-2}\prod_{j=0}^{s-2}x_{\ell_1+2j}x_{n-3}x_{n-1}x_n.
\end{align*}
Therefore, the greatest squarefree monomial of $S$ following $w_{s+1}$ is 
\[u_{s+1} =x_1x_2\cdots x_{\ell_1-2}\prod_{j=0}^{s-2}x_{\ell_1+2j}x_{n-2}x_{n-1}x_n.\]
Since $\m(u_{s+1}) = n$, we choose
\[G(I)_{\ell_{s+1}} =  \{u_{s+1}\}.\]
By hypothesis, $\ell_1+2(s-2) = n-4$, so that the smallest squarefree monomial belonging to 
$\Shad(G(I)_{\ell_{s+1}})$ is 
\begin{align*}
w_{s+2} &= x_1\cdots x_{\ell_1-2}\prod_{j=0}^{s-2}x_{\ell_1+2j}x_{n-3}x_{n-2}x_{n-1}x_n\\
&=x_1\cdots x_{\ell_1-2}\prod_{j=0}^{s-3}x_{\ell_1+2j}x_{n-4}x_{n-3}x_{n-2}x_{n-1}x_n.
\end{align*}
Consequently, the greatest squarefree monomial of $S$ following $w_{s+2}$ is 
\[u_{s+2} =x_1x_2\cdots x_{\ell_1-2}\prod_{j=0}^{s-4}x_{\ell_1+2j}x_{\ell_1+2(s-3)+1}x_{\ell_1+2(s-3)+2}\cdots x_{\ell_1+2(s-3)+6}.\]
Note that $\m(u_{s+2}) = \ell_1+2(s-3)+6 = \ell_1+2s = n$, whence we choose
\[G(I)_{\ell_{s+2}} =  \{u_{s+2}\}.\]
In general, 
\[G(I)_{\ell_{s+q}} =  \{u_{s+q}\},\] 
with
\[u_{s+q} =x_1x_2\cdots x_{\ell_1-2}\prod_{j=0}^{s-2-q}x_{\ell_1+2j}x_{\ell_1+2(s-2-(q-1))+1}\cdots x_{\ell_1+2(s-2-(q-1))+2q+2},\]
for $q=1,\ldots, t$, where $t$ is the positive integer such that $s-2-t=0$ ($t=s-2$). It is easy to verify that $\m(u_{s+q}) = n$.\\ 

Also in such a case we can verify that $s+t = n-\ell_1-2$. Indeed,
since $\m(u_{s+t}) = n$, and $t = s-2$, then 
\[n= \ell_1+2(s-2-(t-1))+2t+2 = \ell_1+2t+4,\]
and
\[n-\ell_1-2 = \ell_1+2t+4 -\ell_1-2 =2t+2 = 2(s-2)+2 = s+t.\]

Finally, as in the previous case, we can choose
\[G(I)_{\ell_{n-\ell_1-1}} = G(I)_{s+t+1}=  \{x_1x_2\cdots x_{\ell_1-2}x_{\ell_1+1}\cdots x_{n}\},\]
and
\[G(I)_{\ell_{n-\ell_1}} = G(I)_{s+t+2} =\{x_1x_2\cdots x_{\ell_1-3}x_{\ell_1-1}x_{\ell_1}\cdots x_{n}\}.\]

It is worthy observing that $I$ is the smallest squarefree lex ideal  of $S$ with $\C(I)$ $=$  $\{(k_1, \ell_1), (k_2, \ell_2), \ldots, (k_r, \ell_r)\}$
and such that $\beta_{k_i, \,k_i+\ell_i}(I)=1$, for all $i$, \emph{i.e.},  $a(I) = (1, \ldots, 1)$. 
\end{proof}

\section{A numerical characterization of extremal Betti numbers}\label{sec:2}

In this Section, we face the following problem.
\begin{Probl} \label{probl:1}
Given three positive integers $n\ge 4$, $\ell_1\ge 2$ and $1\le r \le n-\ell_1$, $r$ pairs of positive integers $(k_1, \ell_1)$,  $\ldots$, $(k_r, \ell_r)$ such that $n-3 \ge k_1 >k_2 > \cdots >k_r\ge 2$ and $2\le \ell_1 < \ell_2 < \cdots < \ell_r$, $k_i+\ell_i\le n$ ($i=1, \ldots, r$), and $r$ positive integers $a_1, \ldots, a_r$, under which conditions does there exist a squarefree monomial ideal $I$ of $S=K[x_1,\dots,x_n]$ such that $\beta_{k_1, k_1+\ell_1}(I) = a_1$, $\ldots$, $\beta_{k_r, k_r+\ell_r}(I) = a_r$ are its extremal Betti numbers?
 \end{Probl}
 
For a pair of positive integers $(k, \ell)$ such that $k+\ell\le n$, we define the following set:
\[A^s(k, \ell) = \{u \in \Mon^s_{\ell}(S) : \m(u) = k+\ell\}.\]

Setting $A^s(k, \ell) = \{u_1, \ldots, u_q\}$, we can suppose, possibly 
after a permutation of the indices, that
\begin{equation} \label{vert}
u_1 >_{\slex} u_2 >_{\slex} \cdots   >_{\slex} u_q.
\end{equation}
For the $i$-th monomial $u$ of degree $\ell$ with $\m(u) = k + \ell$, we mean the monomial of $A^s(k, \ell)$ that appears in the $i$-th position of (\ref{vert}), for $1\le i \le q$.  Note that $u_1 = x_1x_2\cdots x_{\ell-1}x_{k+\ell}$, $u_q = x_{k+1}\cdots x_{k+\ell}$, and $q = \vert A^s(k, \ell)\vert= \binom{k+\ell -1}{\ell -1}$. 

Furthermore, if $u_i, u_j$, $i<j$, are two monomials in (\ref{vert}), we define the following subsets of $A^s(k, \ell)$:
\[[u_i, u_j] = \{w \in  A^s(k, \ell) : u_i \ge_{\slex} w \ge_{\slex} u_j\},\]
\[[u_i, u_j) = \{w \in  A^s(k, \ell) : u_i \ge_{\slex} w >_{\slex} u_j\};\]
$[u_i, u_j]$ will be called the \emph{segment} of $A^s(k, \ell)$ of initial element $u_i$ and final element $u_j$, whereas $[u_i, u_j)$ will be called the \emph{left segment} of $A^s(k, \ell)$ of initial element $u_i$ and final element $u_j$. 
If $i=j$, we set $[u_i,u_j] = \{u_i\}$.

\begin{rem} From (\ref{eq:extr1}), if $(k, \ell)$ is a corner of a squarefree stable ideal $I$ and $\beta_{k, k+\ell}(I)= a$, then there exists a segment $[v_1, v_a]$ of  
 $A^s(k, \ell)$ such that $a = \vert [v_1, v_a]\vert$.
\end{rem}

Next lemma will be crucial in the sequel. It can be easily proved by induction on $n$.

\begin{lem}\label{lem:count} Let $n$ and $q\ge 1$ be two positive integers such that $n\ge q$. Then
\[\binom n q = \binom {n-1} {q-1} + \binom {n-2} {q-1}+ \cdots +\binom {q-1} {q-1}.\]
\end{lem}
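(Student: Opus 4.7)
The plan is to prove the identity by induction on $n$, with $q$ regarded as a parameter satisfying $n\geq q$, as the statement itself suggests. This is in fact the classical ``hockey stick'' identity, and Pascal's rule $\binom{m}{p}=\binom{m-1}{p}+\binom{m-1}{p-1}$ will do all the work.

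For the base case I would take $n=q$, where both sides equal $1$: the left hand side is $\binom{q}{q}=1$, while the right hand side collapses to the single term $\binom{q-1}{q-1}=1$ (the sum contains only one summand because the running index stops at $q-1$). For the inductive step, assume the identity holds for $n-1$, that is,
\[
\binom{n-1}{q}=\binom{n-2}{q-1}+\binom{n-3}{q-1}+\cdots+\binom{q-1}{q-1}.
\]
Applying Pascal's rule to $\binom{n}{q}$ I would then write
\[
\binom{n}{q}=\binom{n-1}{q}+\binom{n-1}{q-1},
\]
substitute the inductive hypothesis for $\binom{n-1}{q}$, and observe that the extra term $\binom{n-1}{q-1}$ produced by Pascal's rule is exactly the missing top summand of the telescoping right hand side for $n$. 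This yields the claimed equality.

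There is no genuine obstacle here; the only point requiring a moment of care is bookkeeping of the summation range (that the sum indeed begins at $\binom{n-1}{q-1}$ and terminates at $\binom{q-1}{q-1}$), so that the inductive hypothesis lines up cleanly with Pascal's decomposition. Alternatively, one could give a one-line bijective argument by partitioning the $\binom{n}{q}$ subsets of $\{1,\ldots,n\}$ of cardinality $q$ according to their maximum element $m\in\{q,q+1,\ldots,n\}$, since the number of such subsets with maximum equal to $m$ is $\binom{m-1}{q-1}$; but for insertion into the paper the inductive version is shortest and matches the author's hint that the lemma ``can be easily proved by induction on $n$.''
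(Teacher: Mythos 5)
Your proof is correct and follows exactly the approach the paper indicates (the paper states only that the lemma ``can be easily proved by induction on $n$'' and omits the details). Your induction with base case $n=q$ and Pascal's rule in the inductive step fills that in cleanly, and the bijective remark via the maximum element is a nice aside but not needed.
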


Given a monomial $u \in A^s(k, \ell)$, the next proposition shows a method, involving Lemma~\ref{lem:count}, to count the number of monomials $v\in A^s(k, \ell)$ such that $v\ge_{slex} u$. 

\begin{thm}\label{thm:count} Let $(k,\ell)$ be a pair of positive integers with $\ell\geq 2$ and let 
$u=x_{i_1}x_{i_2}\cdots x_{i_{\ell-1}} x_{i_{\ell}}$ be a monomial of $A^s(k, \ell)$. Setting $\tilde u=x_{i_1}x_{i_2}\cdots x_{i_{\ell-1}}$,
then $\vert[x_1x_2\cdots x_{\ell-1}x_{k+\ell},u]\vert$ is a sum of $t$ suitable binomial coefficients, where \\
\[t= \begin{cases}
i_1, & \textit{if} \,\,\,\ \Gap(\tilde u)=\emptyset,\\ \\

i_1+\sum_{s=1}^{p} \wdt(g_s\gap(\tilde u)), &\textit{if} \,\,\,\ \Gap(\tilde u)=\{g_1,\ldots,g_p\}\neq \emptyset.
\end{cases}
\]
\end{thm}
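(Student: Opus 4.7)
The plan is to exploit the obvious bijection $v \mapsto v/x_{k+\ell}$ between $[x_1 x_2 \cdots x_{\ell-1} x_{k+\ell}, u]$ and the set $T(\tilde u)$ of squarefree degree $\ell-1$ monomials $\tilde v = x_{j_1}\cdots x_{j_{\ell-1}}$ on $\{x_1,\ldots,x_{k+\ell-1}\}$ with $\tilde v \geq_{\slex} \tilde u$, and then to count $|T(\tilde u)|$ by stratifying according to the first position at which $\tilde v$ differs from $\tilde u$.

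First I would separate the singleton $\{\tilde u\}$, which accounts for $1 = \binom{k+\ell-1-i_{\ell-1}}{0}$ (one binomial coefficient). Then, for $\tilde v \in T(\tilde u)\setminus\{\tilde u\}$, let $s \in \{1,\ldots,\ell-1\}$ be the smallest index with $j_s \ne i_s$. By the definition of $\geq_{\slex}$ one has $j_r = i_r$ for $r<s$ and, adopting the convention $i_0 := 0$, $j_s \in \{i_{s-1}+1,\ldots,i_s - 1\}$; the remaining indices $j_{s+1}<\cdots<j_{\ell-1}$ are chosen freely from $\{j_s+1,\ldots,k+\ell-1\}$, contributing
\[
\binom{k+\ell-1-j_s}{\ell-1-s}
\]
monomials for each admissible pair $(s,j_s)$.

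Next I would count the admissible pairs. The stratum $s=1$ contributes $i_1 - 1$ binomial coefficients, one for each $j_1 \in \{1,\ldots,i_1-1\}$. The stratum $s \geq 2$ is non-empty iff $i_s - i_{s-1} \geq 2$, i.e. iff $s-1 \in \Gap(\tilde u)$, in which case it contributes $\wdt((s-1)\gap(\tilde u))$ binomial coefficients. Adding the single contribution from $\tilde v = \tilde u$ yields a total of
\[
1 + (i_1 - 1) + \sum_{g \in \Gap(\tilde u)}\wdt(g\gap(\tilde u)) \;=\; i_1 + \sum_{g \in \Gap(\tilde u)}\wdt(g\gap(\tilde u))
\]
binomial coefficients, which reduces to $i_1$ when $\Gap(\tilde u)=\emptyset$ and to the second displayed value otherwise, matching $t$ in both branches of the statement.

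The main (mild) obstacle is the bookkeeping: the convention $i_0=0$ is needed so that the stratum $s=1$ fits the same pattern as $s\geq 2$, and one must carefully identify a position $s \geq 2$ in $\tilde v$ with the gap index $g = s-1$ in $\tilde u$. Lemma \ref{lem:count} is not required to obtain the count in the form claimed, but it can be used a posteriori to collapse each inner sum $\sum_{j_s = i_{s-1}+1}^{i_s-1}\binom{k+\ell-1-j_s}{\ell-1-s}$ into a difference of two binomial coefficients, which is occasionally convenient for computations.
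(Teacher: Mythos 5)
Your proof is correct and is essentially the same argument as the paper's: both count $[x_1\cdots x_{\ell-1}x_{k+\ell},u]$ by stratifying according to the first position where a monomial deviates from $u$, and both observe that only gap positions of $\tilde u$ yield nonempty strata, each contributing a number of binomial coefficients equal to the corresponding gap width. Your presentation via the bijection $v\mapsto v/x_{k+\ell}$ and direct enumeration of pairs $(s,j_s)$ is somewhat cleaner than the paper's iterated chain of binomial decompositions built from Lemma~\ref{lem:count}, but the underlying combinatorics and the resulting binomial coefficients coincide.
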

\begin{proof} Set $m=\vert[x_1x_2\cdots x_{\ell-1}x_{k+\ell},u]\vert$. $m$ is the number of all monomials $w\in A^s(k, \ell)$ such that $w\ge_{slex}u$. By Lemma \ref{lem:count}, the binomial coefficient $\binom{k+\ell -1}{\ell -1}=\vert A^s(k, \ell)\vert$ can be decomposed as a sum of $k+1$ binomial coefficients, as follows: 
\begin{equation}\label{eq:binom1}
\resizebox{0.9\hsize}{!}{$\displaystyle \binom{k+\ell -1}{\ell -1}=\sum_{j=1}^{k+1}{\binom{k+\ell-1-j}{\ell-2}}=\binom{k+\ell -2}{\ell -2}+\binom{k+\ell -3}{\ell -2}+\cdots+\binom{\ell -2}{\ell -2}.$}
\end{equation}
One can observe that $\binom{k+\ell -2}{\ell -2}$  counts the monomials $w\in A^s(k, \ell)$ such that $\min(w)$ $=$ $1$, the binomial coefficient $\binom{k+\ell -3}{\ell -2}$ counts the monomials $w\in A^s(k, \ell)$ such that $\min(w)=2$. In general, the binomial coefficient $\binom{k+\ell -i}{\ell -2}$ counts the monomials $w\in A^s(k, \ell)$ such that $\min(w)=i-1$, for $i=4,\ldots,k+2$. Note that $\binom{\ell -2}{\ell -2} = \binom{k+\ell -(k+2)}{\ell -2}$ counts the monomials $w\in A^s(k, \ell)$ with $\min(w)=k+1$. Indeed, there exists only a monomial $w$ of such a type. It is $w=x_{k+1}x_{k+2}\cdots x_{k+\ell} = \min A^s(k, \ell)$.
It is clear that all monomials $w\in A^s(k, \ell)$ with $\min(w)< i_1=\min(\tilde u)= \min(u)$ are greater than $u$. Hence, the first $i_1-1$ binomial coefficients in (\ref{eq:binom1}) give a contribute for the computation of $m$.\\
We need to distinguish two cases:  $\Gap(\tilde u) = \emptyset$, $\Gap(\tilde u) \neq \emptyset$.\\
Note that $\Gap(\tilde u)=\Gap(u)$, or $\Gap(\tilde u)=\Gap(u)-1$.\\
{\bf Case 1.} 
Let $\Gap(\tilde u)=\emptyset$. In such a case, $u$ is the greatest monomial of $A^s(k, \ell)$
with $\min(u) =i_1$. More precisely, the following sum of binomial coefficients
\begin{equation}\label{eq:first}
\sum_{j=1}^{i_1-1}{\binom{k+\ell-1-j}{\ell-2}}
\end{equation}
gives the number of all monomials $w\in A^s(k, \ell)$ greater than $u$. Since $i_1, i_2, \ldots, i_{\ell}$ are consecutive integers, then other monomials greater than $u$ which are different from the $w$'s counted by (\ref{eq:first}) do not exist. Hence,
\[m= \vert[x_1x_2\cdots x_{\ell-1}x_{k+\ell},u]\vert = \sum_{j=1}^{i_1-1}{\binom{k+\ell-1-j}{\ell-2}} +1.
\]
On the other hand, $1 = \binom 00$, and consequently $m$ is the sum of $t=i_1-1+1=i_1= \min(\tilde u)=\min(u)$ binomial coefficients.\\
{\bf Case 2.} Let $\Gap(\tilde u)=\{g_1,\ldots,g_p\}$, $p\geq 1$. 
It is worthy to point out that the existence of the gaps $g_j$ ($j=1, \ldots, p$) implies that $i_{g_j+1}-i_{g_j}-1>0$, \emph{i.e.}, $\supp(\tilde u)\cap \{q: i_{g_j} < q < i_{g_j+1}\}= \emptyset$, for all $j\in [p]$. Thus, all monomials $w \in A^s(k, \ell)$ of the type $x_{i_1}x_{i_2}\cdots x_{i_{g_j}}z$, where $z$ is a monomial of degree $\ell-g_j$ and $\m(z)=k+\ell$ such that 
$\supp (z) \cap \{q: i_{g_j} < q < i_{g_j+1}\}\neq \emptyset$, are greater than $u$.\\
It is clear that all these monomials make up the left segment $[x_1x_2\cdots x_{\ell-1}x_{k+\ell},u)$.\\
Let us consider the $i_1$--th binomial in (\ref{eq:binom1}):
\begin{equation}\label{eq:binom2}
\binom{k+\ell -1-i_1}{\ell -2}=\sum_{j=1}^{k+1}{\binom{k+\ell-1-i_1-j}{\ell-3}}.
\end{equation}
In order to compute all monomials $w$ of the type $x_{i_1}x_{i_2}\cdots x_{i_{g_1}}z$, we need to evaluate $g_1$ successive binomial decompositions until the next one:   
\begin{align}\label{eq:binomg1}
\binom{k+\ell-i_{g_1}-1}{\ell+i_1-i_{g_1}-2} = \sum_{j=1}^{k-i_1+1}{\binom{k+\ell-i_{g_1}-1-j}{\ell+i_1-i_{g_1}-3}}.
\end{align}
The sum of the first $\wdt(g_1\gap(\tilde u))= i_{g_1+1}-i_{g_1}-1$ binomial coefficients in (\ref{eq:binomg1}) gives the number of all monomials $w \in A^s(k, \ell)$ we are looking for.\\
In order to compute all monomials $w\in A^s(k, \ell)$ of the type $x_{i_1}x_{i_2}\cdots x_{i_{g_2}}z$, we consider the $(\wdt(g_1\gap(\tilde u))-1)$--th binomial in (\ref{eq:binomg1}):\\

\resizebox{0.96\hsize}{!}{
\begin{math}
\begin{aligned}
\binom{k+\ell-i_{g_1}-1-\wdt(g_1\gap(\tilde u))-1}{\ell+i_1-i_{g_1}-3} & = \binom{k+\ell-i_{g_1+1}-1}{\ell+i_1-i_{g_1}-3}=\\
 & =\sum_{j=1}^{k-i_1+i_{g_1}- i_{g_1+1}+2} \binom{k+\ell-i_{g_1+1}-1-j}{\ell+i_1-i_{g_1}-4}.
\end{aligned}
\end{math}
}

Hence, evaluating the $i_{g_2}-i_{g_1+1}$ successive binomial decompositions until  
\begin{equation}\label{eq:binomg2}
\resizebox{0.9\hsize}{!}{$\displaystyle \binom{k+\ell-i_{g_2}-1}{\ell+i_1-i_{g_1}-i_{g_2}+i_{g_1+1}-3} = 
\sum_{j=1}^{k-i_1+i_{g_1}-i_{g_1+1}+2}{\binom{k+\ell-i_{g_2}-1-j}{\ell+i_1-i_{g_1}-i_{g_2}+i_{g_1+1}-4}},$}
\end{equation}
the number of all required monomials $w \in A^s(k, \ell)$ will be given by the sum of the first $\wdt(g_2\gap(\tilde u))= i_{g_2+1}-i_{g_2}-1$ binomial coefficients in (\ref{eq:binomg2}).\\
The procedure can be iterated for all $g_j\in\Gap(\tilde u)$, $j\ge 3$.

Finally, $\vert[x_1x_2\cdots x_{\ell-1}x_{k+\ell},u)\vert=i_1-1+\sum_{s=1}^{p}\wdt(g_s\gap(\tilde u))$.
Hence, in order to get $\vert[x_1x_2\cdots x_{\ell-1}x_{k+\ell},u]\vert$,  we must take into account the binomial $\binom{0}{0}$ which counts the monomial $u$:
\[t= i_1-1+\sum_{s=1}^{p}\wdt(g_s\gap(\tilde u))+1=i_1+\sum_{s=1}^{p}\wdt(g_s\gap(\tilde u)).\]
The assertion follows.
\end{proof}
\begin{rem} Our choice to focus on the monomial $\tilde u=x_{i_1}x_{i_2}\cdots x_{i_{\ell-1}}$, instead of $u$,  in Theorem \ref{thm:count} is due to the fact that if 
$i_{\ell-1} < k+\ell -1$, \emph{i.e.}, $\Gap(\tilde u)=\Gap(u)-1$, then all monomials $z \in A^s(k, \ell)$ such that $k+\ell -1\in \supp (z)$ are smaller than $u$, with respect to $\ge_{slex}$.
\end{rem}

Next example illustrates Theorem \ref{thm:count}.
\begin{ex} \label{Ex:count1}
Let $S= K[x_1, \ldots, x_9]$ and consider the monomial $u=x_2x_5x_7x_8$. Set $\tilde u= x_2x_5x_7$. From Remark \ref{rem:equ}, $\vert A^s(4, 4)\vert = \binom{7}{3}=35$. 
In order to compute $m=\vert [x_1x_2x_3x_8, u]\vert$, we consider the following binomial decomposition:
\[\binom{7}{3}=\binom{6}{2}+\binom{5}{2}+\binom{4}{2}+\binom{3}{2}+\binom{2}{2}.\]
Since, $\min(u) = 2$, then all monomials $w\in A^s(4, 4)$ with $\min(w)=1$ are greater than $u$, so we must take into account the binomial coefficient \fbox{$\binom{6}{2}=15$} for the computation of $m$.

Now, let us consider the following binomial decomposition:
\[\binom{5}{2}=\binom{4}{1}+\binom{3}{1}+\binom{2}{1}+\binom{1}{1}.\]
Since $\Gap(\tilde u)=\{1,2\}$ and $\wdt (1\gap(\tilde u))=2$, the sum \fbox{$\binom{4}{1}+\binom{3}{1}=7$} gives the number of all monomials of the type $x_2z\in A^s(4,4)$, with $z$ squarefree monomial of degree $3$ and $\m(z)=8$ such that $\supp (z) \cap \{q: 2 < q <5\}\neq \emptyset$. 

At this stage, we have \fbox{$15+7=22$} monomials. 

The next decomposition we need to consider is
\[
\binom{2}{1}=\binom{1}{0}+\binom{0}{0}.
\] 

Since $2 \in \Gap(\tilde u)$, and $\wdt(2\gap(\tilde u))=1$, we must take into account \fbox{$\binom{1}{0}=1$}. 

Finally, we have obtained \fbox{$22+1=23$} monomials of $A^s(4, 4)$ greater than $u$, and so  
$m=\vert [x_1x_2x_3x_8, u]\vert = 23+1=24$.

The following scheme summarizes the  previous calculations.
\begin{align*}
\tbinom{7}{3}=\boxed{\mathbf{\tbinom{6}{2}}} + & \tbinom{5}{2}+ \tbinom{4}{2}+ \tbinom{3}{2}+ \tbinom{2}{2} \\
 & \tbinom{5}{2}=\boxed{\mathbf{\tbinom{4}{1}}+\mathbf{\tbinom{3}{1}}}+ 
 \begin{aligned}[t]& \tbinom{2}{1} + \tbinom{1}{1}\\
 & \tbinom{2}{1} = \boxed{\mathbf{\tbinom{1}{0}}}+\tbinom{0}{0}.
 \end{aligned}
\end{align*}

Now, consider the monomial $v=x_3x_4x_7x_8$ and let $\tilde v = x_3x_4x_7$.
Proceeding as before, since $\Gap(\tilde v)= \{1\}$, then $\vert [x_1x_2x_3x_8, u]\vert = 27+1$, where $27$ is given by the sum of the highlighted binomial coefficients in the next scheme:

\begin{align*}
\tbinom{7}{3}=\boxed{\mathbf{\tbinom{6}{2}}+\mathbf{\tbinom{5}{2}}}+ & \tbinom{4}{2}+\tbinom{3}{2}+\tbinom{2}{2}\\
  & \tbinom{4}{2}=
  \begin{aligned}[t]& \tbinom{3}{1}+\tbinom{2}{1}+\tbinom{1}{1}\\
       & \tbinom{3}{1}=\boxed{\mathbf{\tbinom{2}{0}}+\mathbf{\tbinom{1}{0}}}+\tbinom{0}{0}.
  \end{aligned}
\end{align*}

Here is the list of all monomials which come into play for $u$ and $v$:
\begin{flalign*}
x_1x_2x_3x_8, x_1x_2x_4x_8, x_1x_2x_5x_8, x_1x_2x_6x_8, x_1x_2x_7x_8,\\
x_1x_3x_4x_8, x_1x_3x_5x_8, x_1x_3x_6x_8, x_1x_3x_7x_8,\\
x_1x_4x_5x_8, x_1x_4x_6x_8, x_1x_4x_7x_8,\\
x_1x_5x_6x_8, x_1x_5x_7x_8,\\
x_1x_6x_7x_8,\\
x_2x_3x_4x_8, x_2x_3x_5x_8, x_2x_3x_6x_8, x_2x_3x_7x_8,\\
x_2x_4x_5x_8, x_2x_4x_6x_8, x_2x_4x_7x_8,\\
x_2x_5x_6x_8, \mathbf{x_2x_5x_7x_8},\\
x_2x_6x_7x_8,\\
x_3x_4x_5x_8, x_3x_4x_6x_8, \mathbf{x_3x_4x_7x_8},\\
x_3x_5x_6x_8, x_3x_5x_7x_8,\\
x_3x_6x_7x_8,\\
x_4x_5x_6x_8, x_4x_5x_7x_8,\\
x_4x_6x_7x_8,\\
x_5x_6x_7x_8
\end{flalign*}

\end{ex}

Now, let $u_1, \ldots u_r$ be squarefree monomials of degree $q$ of $S$. We denote by $B(u_1, \ldots,u_r)$ the smallest squarefree strongly stable set of $\Mon_q^s(S)$ containing the monomials $u_1,\ldots, u_r$.

It is well known that if $q<n$, $\Shad(B(u_1, \ldots, u_r))$ is a squarefree strongly stable set of monomials of degree $q+1$ of $S$, and consequently $\Shad^i(B(u_1, \ldots, u_r))$ is a squarefree strongly stable set of degree $q+i$, for $1\leq i\leq n-q$.\\

Now, let $(k_1, \ell_1)$ and $(k_2, \ell_2)$ be two pairs of positive integers such that $k_1> k_2$, $\ell_1 < \ell_2$, $k_i+\ell_i\le n$ ($i=1, 2$). If $u_1, \ldots, u_r \in \Mon_{\ell_1}^s(S)$ are squarefree monomials of $S$ such that $\m(u_j) = k_1+\ell_1$, $j=1, \ldots, r$, we define the following set:
\[\BShad(u_1, \ldots, u_r)_{(k_2, \ell_2)} = \{v \in \Shad^{\ell_2-\ell_1}(B(u_1, \ldots, u_r)) : \m(v) \le k_2+\ell_2\}.\]

One can quickly observe that $\BShad(u_1, \ldots, u_r)_{(k_2, \ell_2)}$ is a squarefree strongly stable set of degree $\ell_2$ of $S$.

\begin{rem}  It is worthy to underline that if one wants to compute the minimum of $\BShad(u_1, \ldots, u_r)_{(k_2, \ell_2)}$, it is sufficient to determine $\min\BShad(u_r)_{(k_2, \ell_2)}$. Furthermore, in order to obtain such a monomial, one can suitably manage the integers in $\supp(u_r)$, as we will see in a while.
\end{rem}

\begin{defn} \rm Let $u$ be a squarefree monomial of degree $q$ of $S$, $q<n$. Let $p\le n$ a positive integer such that $[p]\setminus \supp(u)\neq \emptyset$ and $\{j_1, \ldots, j_t\}$ a subset of $[p]\setminus \supp(u)$, with $j_1 < j_2 < \cdots < j_t$, $q+t \le n$.
The monomial $x_{j_1} \cdots x_{j_t} u \in \Mon_{q+t}^s(S)$ is called the joint of $u$ with the variables $x_{j_1}, \ldots, x_{j_t}$. 
\end{defn}

\begin{ex} Let $u = x_1x_3x_6x_8\in K[x_1, \ldots, x_9]$. Let $p=7$ and consider the set $\{2,4,7\} \subset [7]\setminus \{1, 3, 6, 8\}$. The joint of $u$ with $x_2, x_4, x_7$ is the squarefree monomial $x_1x_2x_3x_4x_6x_7x_8\in \Mon^s_7(S)$.
\end{ex}

With the same notations as before, we give the construction of the monomial $\min\BShad(u)_{(k_2, \ell_2)}$ for a given squarefree monomial $u\in A^s(k_1, \ell_1)$.
\begin{Const}  \label{const} Let $(k_1, \ell_1)$ and $(k_2, \ell_2)$ be two pairs of positive integers such that $k_1> k_2$, $2\leq \ell_1 < \ell_2$ and $k_i+\ell_i\le n$, for $i=1,2$. Let $u = x_{i_1}\cdots x_{i_{\ell_1}}$ be a squarefree monomial of $A^s(k_1, \ell_1)$.
Assume $i_t$ to be the greatest integer belonging to $\supp(u)$ such that $i_t<k_2+\ell_2$, and write
\[u=x_{i_1}\cdots x_{i_t} \cdots x_{i_{\ell_1}}.\]
Let us consider the monomial $\overline u = x_{i_1}\cdots x_{i_t}$ and let $j_1, \ldots, j_{\ell_2-t}$ be the greatest integers belonging to $[k_2+\ell_2]\setminus \supp(\overline u)$. Then,  
\[\min\BShad(u)_{(k_2, \ell_2)}= x_{j_1} \cdots x_{j_{\ell_2-t}}\overline u \in A^s(k_2, \ell_2).\]
\end{Const}
Construction \ref{const} assures the correctness of the next algorithm.

\begin{center}
\scalebox{0.9}{
\begin{algorithm}[H]
  \caption{Computation of $\min\BShad(u)_{(k, \ell)}$}
  \label{alg:1}
  \KwIn{Polynomial ring $S$, monomial $u$, positive integer $k$, positive integer $\ell$}
  \KwOut{monomial $v$}
  \SetKw{Error}{error}
  \Begin{
  $j \gets k+\ell$\;  
  $t \gets \mid\{i\in \supp(u)\ :\ i<j\}\mid$\;
  $v \gets $ the first $t$ variables of $u$\;
  
  $q \gets \ell-t$\;
  \While{$q>0$}{    
    \If{$j\notin \supp(v)$}{
      \eIf{$j>0$}{  
        $v \gets v*S_j$\;
      }{
        \Error{no monomial}\;
      } 
      $q \gets q-1$\;               
    }      
    $j \gets j-1$\;
  }   
  \Return $v$\;
}
\end{algorithm}
}
\end{center}
\vspace{0,5cm}

\begin{lem}\label{lem:next}
Take two pairs of positive integers $(k_1, \ell_1)$ and $(k_2, \ell_2)$  such that $k_1> k_2$, $2\le \ell_1 < \ell_2$ with $k_i+\ell_i\le n$, for $i=1,2$. Let $u$ be a squarefree monomial of degree $\ell_1$ with $\m(u) = k_1+\ell_1$ and let $v= \min\BShad(u)_{(k_2, \ell_2)}$.
If $\Gap(v)\neq \emptyset$, then there exists a monomial 
$w \in A^s(k_2, \ell_2)\setminus \BShad(u)_{(k_2, \ell_2)}$ .
\end{lem}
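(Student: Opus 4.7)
The plan is to exploit the minimality of $v$ in the squarefree lex order directly. Since $\BShad(u)_{(k_2,\ell_2)}$ is a squarefree strongly stable set and $v$ is its $\geq_\slex$-minimum, every squarefree monomial of degree $\ell_2$ strictly smaller than $v$ in $\geq_\slex$ automatically lies outside $\BShad(u)_{(k_2,\ell_2)}$. Under the gap hypothesis on $v$, I will exhibit one such monomial inside $A^s(k_2,\ell_2)$.

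First I would set up the stability scaffolding. The set $B(u)$ is squarefree strongly stable by definition; the squarefree shadow operator $\Shad$ preserves squarefree strong stability, and so does the truncation $T\mapsto\{m\in T:\m(m)\leq k_2+\ell_2\}$. Both verifications are routine, so $\BShad(u)_{(k_2,\ell_2)}$ is squarefree strongly stable and its $\geq_\slex$-minimum $v$ is well-defined.

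Next I would use the gap. Writing $\supp(v)=\{q_1<q_2<\cdots<q_{\ell_2}\}$ with $q_{\ell_2}=k_2+\ell_2$, the hypothesis $\Gap(v)\neq\emptyset$ yields some $s\in\{1,\ldots,\ell_2-1\}$ with $q_{s+1}-q_s\geq 2$; in particular $q_s+1\notin\supp(v)$. Setting
\[w=\frac{x_{q_s+1}\,v}{x_{q_s}},\]
I obtain a squarefree monomial of degree $\ell_2$ whose sorted support is $\{q_1,\ldots,q_{s-1},q_s+1,q_{s+1},\ldots,q_{\ell_2}\}$. Because $s<\ell_2$, the largest index $q_{\ell_2}=k_2+\ell_2$ is unchanged, so $\m(w)=k_2+\ell_2$ and hence $w\in A^s(k_2,\ell_2)$. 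A direct comparison in squarefree lex shows that $v$ and $w$ agree in positions $1,\ldots,s-1$ and differ at position $s$, where $v$ has $q_s$ while $w$ has $q_s+1>q_s$; consequently $v>_\slex w$.

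The conclusion will then be immediate: as $v$ is the $\geq_\slex$-minimum of $\BShad(u)_{(k_2,\ell_2)}$ and $w<_\slex v$, the monomial $w$ cannot belong to that set, giving $w\in A^s(k_2,\ell_2)\setminus\BShad(u)_{(k_2,\ell_2)}$. The only genuine obstacle is the preliminary check that $\BShad(u)_{(k_2,\ell_2)}$ is squarefree strongly stable; once that is in place, the rest of the argument is a single $\slex$ comparison and makes no appeal to the explicit form of $v$ provided by Construction~\ref{const}.
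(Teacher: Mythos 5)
Your proof is correct, and for the purposes of this lemma it is cleaner than the paper's. Both arguments hinge on the same mechanism: any $w <_\slex v$ with $\m(w)=k_2+\ell_2$ lies in $A^s(k_2,\ell_2)\setminus\BShad(u)_{(k_2,\ell_2)}$ simply because $v$ is the $\slex$-minimum of $\BShad(u)_{(k_2,\ell_2)}$. The difference is the witness. The paper sets $p=\max\Gap(v)$ and exhibits the \emph{greatest} squarefree monomial that is $<_\slex v$, then splits into two cases according to whether this $\slex$-successor already has maximal index $k_2+\ell_2$ or needs its last variable shifted to $x_{k_2+\ell_2}$; this pays off elsewhere because the $\slex$-successor is precisely what Algorithm~\ref{alg:2} computes, so the lemma falls out of machinery the authors need anyway. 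You instead pick any gap position $s$ and increment $q_s$ by one; since $s<\ell_2$, the index $q_{\ell_2}=k_2+\ell_2$ is untouched, so $\m(w)=k_2+\ell_2$ holds automatically and no case split is needed --- a more economical argument that proves exactly the existence statement and nothing more. One small remark: your opening appeal to squarefree strong stability of $\BShad(u)_{(k_2,\ell_2)}$ is superfluous here. Minimality of $v$ in the $\slex$ order by itself guarantees $w\notin\BShad(u)_{(k_2,\ell_2)}$ whenever $w<_\slex v$; no structural hypothesis on the set is required for that step, so what you flag as the ``only genuine obstacle'' is not in fact an obstacle to the proof.
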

\begin{proof} Let 
\[v=\min\BShad(u)_{(k_2, \ell_2)}= x_{r_1}\cdots x_{r_{\ell_2}}.\]
One has $\m(v)= k_2+\ell_2$. 
Assume $p =  \max \Gap(v)$,  then the greatest squarefree monomial following $v$ in the squarefree lex order is  
$x_{r_1} \cdots x_{r_{p-1}}x_{r_p+1}\cdots$ $x_{r_p+ \ell_2-p+1}$, with $r_p+ \ell_2-p+1\le k_2+\ell_2$. Hence, if $r_p+ \ell_2-p+1= k_2+\ell_2$, we choose
\[w=  x_{r_1} \cdots x_{r_{p-1}}x_{r_p+1}\cdots x_{r_p+ \ell_2-p+1}.\]
Otherwise, if $r_p+ \ell_2-p+1< k_2+\ell_2$, let
\[w=x_{r_1} \cdots x_{r_{p-1}}x_{r_p+1}\cdots x_{r_p+ \ell_2-p}x_{k_2+\ell_2}.\]
\end{proof}

Next pseudocode describes the procedure in Lemma \ref{lem:next}.

\begin{center}
\scalebox{0.9}{
\begin{algorithm}[H]
  \caption{Computation of the next monomial smaller than a given $u$ in $A^s(k,\ell)$}
  \label{alg:2}
  \KwIn{Polynomial ring $S$, monomial $u$}
  \KwOut{monomial $w$}
  \SetKw{Error}{error}
  \Begin{
  $m \gets \max\ \supp(u)$\;
  $\ell \gets \deg(u)$\;   
  \eIf{$\Gap(u)\neq\emptyset$}{
    $t \gets \max\ \Gap(u)$\;
    $w \gets $ the first $t-1$ variables of $u$\;
    $j \gets $ index of variable of $u$ at position $t$\;
  
    \ForEach{$i\in \{1\, .\, .\, \ell-t\}$}{
      $j \gets j+1$ \;
      $w \gets w*S_j$\;
    }  
    $w \gets w*S_{m}$\; 
  }{
    \Error{no monomial}\;
  } 
  \Return $w$\;
}
\end{algorithm}
}
\end{center}

The discussion below is significant for solving Problem \ref{probl:1}.

\begin{Discus} \label{disc} \em Let $(k_1, \ell_1)$ and $(k_2, \ell_2)$ be two pairs of positive integers such that $k_1> k_2$, $2\leq \ell_1 < \ell_2$ with $k_i+\ell_i\le n$ ($i=1,2$) and let $a_1,a_2$ be two positive integers. 

Let $T$ be a segment of $A^s(k_2, \ell_2)$ of cardinality $a_2< \binom{k_2+\ell_2 -1}{\ell_2 -1}$. We want to determine the admissible values for $a_1\leq \binom{k_1+\ell_1 -1}{\ell_1 -1}$ so that there exists a segment $[u_1, u_{a_1}]$ of $A^s(k_1, \ell_1)$ of cardinality $a_1$ and such that $\BShad([u_1, u_{a_1}])_{(k_2, \ell_2)} $
$\nsupseteq T$.
It is clear that it should be $a_1< \binom{k_1+\ell_1 -1}{\ell_1 -1}$.

Now, set $T=[z_1, z_{a_2}]$, and assume $T\nsubseteq \BShad([u_1, u_{a_1}])_{(k_2, \ell_2)}$. Let $v_1\in A^s(k_1, \ell_1)$ be the smallest monomial such that $z_1 \notin \BShad(v_1)_{(k_2, \ell_2)}$.
Such a monomial allows us to determine the bound on $a_1$ for which there exists the segment $T$.

Indeed, we can compute the following cardinalities (Theorem \ref{thm:count}):
\begin{align*}
n_1 &= \vert \{u\in A^s(k_1, \ell_1): u\ge v_1\}\vert = \vert[x_1x_2 \cdots x_{\ell_1-1}x_{k_1+\ell_1},v_1]\vert,\\
p_1&= \vert \{v\in A^s(k_1, \ell_1): v> u_1\}\vert =\vert[x_1x_2 \cdots x_{\ell_1-1}x_{k_1+\ell_1},u_1)\vert .
\end{align*}

Hence, since $[u_1, u_{a_1}]\subseteq [x_1x_2 \cdots x_{\ell_1-1}x_{k_1+\ell_1},v_1]$, we get the following coarse bound for $a_1$:
\[a_1 \le n_1;\]
then, we can refine such a bound via $p_1$ as follows:
\[
a_1\le n_1-p_1.
\]

One can notice, that if $u_1 = \max A^s(k_1, \ell_1)$, then $p_1=0$.
\end{Discus}

\begin{ex}
Given $S= K[x_1, \ldots, x_{10}]$, let us consider the pairs of positive integers $(5,4)$ and $(2,6)$, the positive integers 
$a_1 = 8$ and $a_2 = 6$, and the following segment of $A^s(5,4)$ of cardinality $a_1 = 8$:
\begin{align*}
[x_1x_3x_4x_9, x_1x_4x_7x_9] = \{x_1x_3x_4x_9, x_1x_3x_5x_9, & x_1x_3x_6x_9, x_1x_3x_7x_9, x_1x_3x_8x_9,\\ & x_1x_4x_5x_9, x_1x_4x_6x_9, x_1x_4x_7x_9\}.
\end{align*}

We want to verify if there exists a segment of $A^s(2,6)$ of cardinality $a_2=6$ not contained in $\BShad([x_1x_3x_4x_9, x_1x_4x_7x_9])_{(2, 6)}$.\\

First, from Remark \ref{rem:equ}, we know that $a_1\leq\binom{8}{3}=56$ and $a_2\leq\binom{7}{5}=21$.\\
In order to determine \scalebox{0.97}{$p_1= \vert \{v\in A^s(5, 4): v> x_1x_3x_4x_9\}\vert=$
$\vert[x_1x_2x_3x_9,x_1x_3x_4x_9)\vert$}, we need to consider 
a suitable sequence of binomial decompositions. The first  binomial decomposition that we have to examine is 
\[\binom{8}{3}=\binom{7}{2}+\binom{6}{2}+\binom{5}{2}+\binom{4}{2}+\binom{3}{2}+\binom{2}{2}.\]
Then, applying the procedure described in Theorem \ref{thm:count} (see also Example \ref{Ex:count1}), we obtain the following sequence of binomial decompositions,
\begin{align*}
\tbinom{8}{3}= & \tbinom{7}{2}+\tbinom{6}{2}+\tbinom{5}{2}+\tbinom{4}{2}+\tbinom{3}{2}+\tbinom{2}{2}\\
  & \tbinom{7}{2}= \boxed{\mathbf{\tbinom{6}{1}}}+\tbinom{5}{1}+\tbinom{4}{1}+\tbinom{3}{1}+\tbinom{2}{1}+\tbinom{1}{1},
\end{align*}
whereupon $p_1= 6$.

In order to compute $n_1$, we consider the set $A_2$ consisting of the smallest $a_2=6$ monomials of $A^s(2,6)$: 
\begin{align*}
A_2=\{x_2x_3x_4x_5x_6x_8, x_2x_3x_4x_5x_7x_8, &x_2x_3x_4x_6x_7x_8, x_2x_3x_5x_6x_7x_8,\\
&x_2x_4x_5x_6x_7x_8, x_3x_4x_5x_6x_7x_8\}. 
\end{align*}
These monomials can be found using the ``reversal'' of Algorithm~\ref{alg:2}.

The smallest monomial $z$ of $A^s(5,4)$ such that $\max A_2 =$ $x_2x_3x_4x_5x_6x_8 \notin \BShad(z)_{(2,6)}$ is $z=x_1x_7x_8x_9$. The number of all monomials $w\in A^s(5,4)$ greater than or equal to $z$ is determined by the following binomial sequences:
\begin{align*}
\tbinom{8}{3}= & \tbinom{7}{2}+\tbinom{6}{2}+\tbinom{5}{2}+\tbinom{4}{2}+\tbinom{3}{2}+\tbinom{2}{2}\\
 & \tbinom{7}{2}= \boxed{\mathbf{\tbinom{6}{1}}+\mathbf{\tbinom{5}{1}}+\mathbf{\tbinom{4}{1}}+\mathbf{\tbinom{3}{1}}+\mathbf{\tbinom{2}{1}}}+\tbinom{1}{1}.   
\end{align*}

Hence, we have $n_1=(6+5+4+3+2)+1=21$ monomials.
Finally, we have $a_1\leq n_1-p_1=21-6=15$. 

For $a_1=15$, then a segment of $A^s(2,6)$ of length $a_2=6$ is 
\[A_2 = [x_2x_3x_4x_5x_6x_8, x_3x_4x_5x_6x_7x_8].\]
\end{ex}

Discussion \ref{disc} yields the following result.
\begin{thm} \label{thm:constru} Consider three positive integers $n\ge 5$, $\ell_1\ge 3$ and $1\le r \le n-\ell_1$, $r$ pairs of positive integers $(k_1, \ell_1)$,  $\ldots$, $(k_r, \ell_r)$ such that $n-3 \ge k_1 >k_2 > \cdots >k_r\ge 2$ and $2\le \ell_1 < \ell_2 < \cdots < \ell_r$, $k_i+\ell_i\le n$ ($i=1, \ldots, r$), and $r$ positive integers $a_1, \ldots, a_r$. Let $K$ be a field of characteristic zero. The following conditions are equivalent:
\begin{enumerate}
\item[\rm(1)] There exists a squarefree graded ideal $J$ of $S=K[x_1,\dots,x_n]$ with\\ $\beta_{k_1, k_1+\ell_1}(J) = a_1$, $\ldots$, $\beta_{k_r, k_r+\ell_r}(J) = a_r$ as extremal Betti numbers.
\item[\rm(2)] There exists a squarefree strongly stable ideal $I$ of $S=K[x_1,\dots,x_n]$ with $\beta_{k_1, k_1+\ell_1}(I) = a_1$, $\ldots$, $\beta_{k_r, k_r+\ell_r}(I) = a_r$ as extremal Betti numbers.
\item[\rm(3)] Setting
\begin{enumerate}
\item[\rm(i)] $v_r = x_{k_r+1}\cdots x_{k_r+\ell_r}$,
\item[] $A_r=[w_r,v_r]$, with $w_r\in A^s(k_r, \ell_r)$ and such that $\vert A_r\vert = a_r$;
\item[\rm(ii)] for $i=1,\ldots, r-1$, 
\item[] \scalebox{0.97}{$v_{r-i} = \min\{u\in A^s(k_{r-i}, \ell_{r-i}): \max A_{r-i+1} \notin \BShad(u)_{(k_{r-i+1}, \ell_{r-i+1})}\}$}, 
\item[] $A_{r-i} = [w_{r-i},v_{r-i}]$, with $w_{r-i}\in A^s(k_{r-i}, \ell_{r-i})$ and such that $\vert A_{r-i} \vert = a_{r-i}$;
\item[\rm(iii)] for $i=1, \ldots, r$, $n_i = \vert \{u\in A^s(k_i, \ell_i): u\ge v_i\}\vert$, 
then the integers $a_i$ satisfy the following conditions:
\[a_i \le n_i.\] 
If $a_i=\vert [u_{i, 1}, u_{i, a_i}]\vert$, $u_{i, j}\in A^s(k_i, \ell_i)$ ($j=1, \ldots, a_i$) and $p_i= \vert \{v\in A^s(k_i, \ell_i): v> u_{i, 1}\}\vert$, then  $a_i \le n_i-p_i$, for $i=1, \ldots, r$.
\end{enumerate}
\end{enumerate}
\end{thm}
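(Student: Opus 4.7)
The plan is to first dispose of (1)$\Leftrightarrow$(2) using the Aramova--Herzog--Hibi machinery, and then to establish (2)$\Leftrightarrow$(3) by a descending induction on the corner index $i$. Since $\Char(K)=0$, given any squarefree graded ideal $J$ of $S$ the ideal $\Gin(J)^{\sigma}$ is squarefree strongly stable by \cite[Lemma~1.2]{AHH3} and shares its extremal Betti numbers with $J$ by \cite[Theorem~2.4]{AHH3}; this yields (1)$\Leftrightarrow$(2) at once, so the substance of the theorem is the equivalence between (2) and (3).

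For (3)$\Rightarrow$(2), I would build $I$ as a union of squarefree strongly stable closures, scanning the index from $i=r$ down to $i=1$. At step $i=r$, take $I_r$ to be the squarefree strongly stable monomial ideal generated by the segment $A_r$; since $v_r=x_{k_r+1}\cdots x_{k_r+\ell_r}$ is the smallest element of $A^s(k_r,\ell_r)$, Characterization~\ref{Char} and formula~(\ref{eq:extr1}) give $\beta_{k_r,k_r+\ell_r}(I_r)=a_r$ at the corner $(k_r,\ell_r)$. At step $i<r$, enlarge $I_{i+1}$ by adjoining as new minimal generators the monomials of $A_i$ that are not already squarefree-stably absorbed from the generators below; the key condition $\max A_{i+1}\notin\BShad(v_i)_{(k_{i+1},\ell_{i+1})}$ from (ii) ensures that the $(\ell_{i+1}-\ell_i)$-fold squarefree shadow of these new generators, truncated at $\m\le k_{i+1}+\ell_{i+1}$, cannot absorb the top element of $A_{i+1}$, so the corner $(k_{i+1},\ell_{i+1})$ is preserved while a new corner appears precisely at $(k_i,\ell_i)$. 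The bound $a_i\le n_i-p_i$ supplies exactly the combinatorial room, computed via Theorem~\ref{thm:count}, to place such a segment in $A^s(k_i,\ell_i)$. Setting $I:=I_1$ finishes this direction.

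For (2)$\Rightarrow$(3), assume $I$ is a squarefree strongly stable ideal of $S$ whose extremal Betti numbers are as prescribed. By Characterization~\ref{Char} and the squarefree strongly stable property, the set $E_i:=\{u\in G(I)_{\ell_i}:\m(u)=k_i+\ell_i\}$ is a segment of $A^s(k_i,\ell_i)$, and by (\ref{eq:extr1}) its cardinality equals $\beta_{k_i,k_i+\ell_i}(I)=a_i$. Running the induction from $i=r$ downward, the extremality at the top corner forces $E_r$ to be a segment ending no later than $v_r$, yielding $a_r\le n_r-p_r$. Inductively, once $E_{i+1},\ldots,E_r$ have been identified, the extremality of $\beta_{k_{i+1},k_{i+1}+\ell_{i+1}}$ forbids the iterated shadow of $E_i$ from covering $\max E_{i+1}$, which by Lemma~\ref{lem:next} and Construction~\ref{const} translates into the requirement that the last element of $E_i$ lie slex-above $v_i$. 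Applying Theorem~\ref{thm:count} to count the admissible segments of $A^s(k_i,\ell_i)$ then delivers $a_i\le n_i-p_i$.

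The main obstacle is the bookkeeping for intermediate degrees: one must argue that no choice of generators at degrees $\ell$ with $\ell_i<\ell<\ell_{i+1}$ can either spawn a spurious corner or loosen the bound on $a_i$. This is handled by the second clause of Characterization~\ref{Char}, demanding $\m(u)<k_i+j$ for every $u\in G(I)_j$ with $j>\ell_i$, which forces each such intermediate generator to lie inside the relevant $\BShad$; together with the explicit joint produced by Construction~\ref{const}, this makes the counting of Theorem~\ref{thm:count} sharp and closes the argument.
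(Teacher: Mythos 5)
Your overall architecture mirrors the paper's: dispose of (1)$\Leftrightarrow$(2) via the Aramova--Herzog--Hibi shifting operator, prove (3)$\Rightarrow$(2) by an explicit construction, and prove (2)$\Rightarrow$(3) by counting monomials in $A^s(k_i,\ell_i)$ using Theorem~\ref{thm:count}, Lemma~\ref{lem:next} and Construction~\ref{const}. The paper does exactly this, so in that sense your route is essentially the same. Two points deserve scrutiny, however.

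First, in (3)$\Rightarrow$(2) you build the ideal top-down, starting from $A_r$ and ``enlarging'' by adjoining lower-degree generators, while the paper builds it bottom-up, setting $G(I)_{\ell_1}=B(u_{1,1},\dots,u_{1,a_1})$ and then $G(I)_{\ell_i}=B(u_{i,1},\dots,u_{i,a_i})\setminus\BShad^{\ell_i-\ell_{i-1}}(\cdot)_{(k_i,\ell_i)}$. The two processes yield the same ideal, but your sentence ``adjoining\ldots the monomials of $A_i$ that are not already squarefree-stably absorbed from the generators below'' is incoherent in the descending scheme: at step $i$ there are no generators of degree less than $\ell_i$ yet, and the generators already present (degree $\ge\ell_{i+1}$) cannot absorb lower-degree monomials. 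This can be repaired, but as written it is circular.

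Second, and more seriously, in (2)$\Rightarrow$(3) you assert that $E_i:=\{u\in G(I)_{\ell_i}:\m(u)=k_i+\ell_i\}$ ``is a segment of $A^s(k_i,\ell_i)$.'' This is false for a general squarefree strongly stable ideal. For example, in $K[x_1,\dots,x_6]$ take $I$ generated by $B(x_2x_3x_6)$; then $E_1=\{x_1x_2x_6,\,x_1x_3x_6,\,x_2x_3x_6\}$, which omits $x_1x_4x_6$ and $x_1x_5x_6$ even though both lie strictly between $x_1x_3x_6$ and $x_2x_3x_6$ in the squarefree lex order. The Borel closure of a monomial is strongly stable but not lex, so $E_i$ need only be a subset of a segment, not a segment. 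What your argument actually needs -- and what Discussion~\ref{disc} and Lemma~\ref{lem:next} deliver -- is the weaker containment $E_i\subseteq[u_{i,1},v_i]$, where $u_{i,1}=\max E_i$ and $v_i$ is defined in (ii): this already gives $a_i=\vert E_i\vert\le\vert[u_{i,1},v_i]\vert=n_i-p_i$. Since Theorem~\ref{thm:count} counts monomials of a \emph{segment}, invoking it for the non-segment $E_i$ is a gap; the correct move is to apply it to the enclosing segment $[u_{i,1},v_i]$ and use the inclusion. The conclusion you reach is right, but the stated justification is not.
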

\begin{proof} (1) $\Leftrightarrow$ (2). See \cite{AHH3} and the introduction in this paper.\\
(2) $\Rightarrow$ (3). It follows applying iteratively Discussion \ref{disc}, for $i=1, \ldots, r$. Note that $v_r=\min A^s(k_r,\ell_r)$, and consequently $n_r = \binom{k_r+\ell_r -1}{\ell_r -1}$; whereas $p_1=0$.\\
(3) $\Rightarrow$ (2). We construct a squarefree strongly stable ideal $I$ of $S$ generated in degrees $\ell_1, \ldots, \ell_r$ as follows:
\begin{enumerate}
\item[-] $G(I)_{\ell_1} =B(u_{1, 1}, \ldots, u_{1, a_1})$;
\item[-] $G(I)_{\ell_2} = B(u_{2, 1}, \ldots, u_{2, a_2}) \setminus \BShad^{\ell_2-\ell_1}(G(I)_{\ell_1})_{(k_2, \ell_2)}$;
\item[-] $G(I)_{\ell_i} = B(u_{i, 1}, \ldots, u_{i, a_i}) \setminus \BShad^{\ell_i-\ell_{i-1}}(\Mon^s(I_{\ell_{i-1}}))_{(k_i, \ell_i)}$, for \scalebox{0.9}{$i=3, \ldots, r$}, where $\Mon^s(I_{\ell_{i-1}})$ is the set of all squarefree monomials of degree $\ell_{i-1}$ belonging to $I_{\ell_{i-1}}$.
\end{enumerate}
The monomials $u_{i,1},\ldots,u_{i,a_i}$, for $i=1,\ldots,r$, are the \emph{basic monomials} of $I$.
\end{proof}

\begin{rem} A similar statement can be formulated in the case $\ell_1 =2$ and $n\ge 5$.
\end{rem}

%

Next example illustrates Theorem \ref{thm:constru}.

\begin{ex}
Let $n=11$, $r=4$, $\mathcal C=\{(8,3),(4,5),(3,6),(2,9)\}$ and $a$ $=$ $(a_1, a_2, a_3, a_4)=(7,5,2,2)$. We want to construct a squarefree strongly stable ideal $I$ of $S= K[x_1, \ldots, x_{11}]$ generated in degrees 3,5,6,9 and such that $\C(I)=\mathcal C$, $a(I)=a$.\\

With the same notations as in Theorem \ref{thm:constru}, before starting the construction of the ideal, we verify if the coarse bounds are satisfied for each $a_i$, $i=1,\ldots,4$.

First of all, $v_4 = x_3x_4x_5x_6x_7x_8x_9x_{10}x_{11}$ and $n_4 = \vert[x_1x_2x_3x_4x_5x_6x_7x_8x_{11}, v_4]\vert$ $=$ $\binom{10}{8}=45$. 
Hence, $a_4=2\leq n_4$.

Moreover, $A_4=\{x_2x_4x_5x_6x_7x_8x_9x_{10}x_{11}, x_3x_4x_5x_6x_7x_8x_9x_{10}x_{11}\}$,\\
$v_3=$ $x_2x_3x_6x_7x_8x_9$, and from the binomial decompositions
\begin{align*}
\tbinom{8}{5}= \boxed{\mathbf{\tbinom{7}{4}}}+ & \tbinom{6}{4}+\tbinom{5}{4}+\tbinom{4}{4}\\
 & \tbinom{6}{4}= 
 \begin{aligned}[t] & \tbinom{5}{3}+\tbinom{4}{3}+\tbinom{3}{3}\\
 &\tbinom{5}{3}= \boxed{\mathbf{\tbinom{4}{2}}+\mathbf{\tbinom{3}{2}}}+\tbinom{2}{2}
 \end{aligned}
\end{align*}     
we obtain $a_3=2\leq n_3=\vert[x_1x_2x_3x_4x_5x_9, v_3]\vert =35+(6+3)+1=45$.

Furthermore, $A_3=\{x_2x_3x_5x_7x_8x_9, x_2x_3x_6x_7x_8x_9\}$ and $v_2=x_2x_3x_5x_6x_9$. From the binomial decompositions
\begin{align*}
\tbinom{8}{4}= \boxed{\mathbf{\tbinom{7}{3}}}+ & \tbinom{6}{3}+\tbinom{5}{3}+\tbinom{4}{3}+\tbinom{3}{3}\\
 & \tbinom{6}{3}= 
 \begin{aligned}[t] & \tbinom{5}{2}+\tbinom{4}{2}+\tbinom{3}{2}+\tbinom{2}{2}\\
 & \tbinom{5}{2}= \boxed{\mathbf{\tbinom{4}{1}}}+\tbinom{3}{1}+\tbinom{2}{1}+\tbinom{1}{1}
 \end{aligned}
\end{align*}         
one has $a_2=5\leq n_2 = \vert[x_1x_2x_3x_4x_9, v_2]\vert =(35+4)+1=40$.

Finally,  $A_2=[x_2x_3x_4x_5x_9, x_2x_3x_5x_6x_9]$ $=\{x_2x_3x_4x_5x_9,$ $x_2x_3x_4x_6x_9,$\\ $ x_2x_3x_4x_7x_9, $ $x_2x_3x_4x_8x_9,$ $x_2x_3x_5x_6x_9 \}$ 
and $v_1=x_1x_{10}x_{11}.$
The binomial decompositions
\begin{align*}
\tbinom{10}{2}= & \tbinom{9}{1}+ \tbinom{8}{1}+\tbinom{7}{1}+\tbinom{6}{1}+\tbinom{5}{1}+\tbinom{4}{1}+\tbinom{3}{1}+\tbinom{2}{1}+\tbinom{1}{1}\\
& \tbinom{9}{1}= \boxed{\mathbf{\tbinom{8}{0}}+\mathbf{\tbinom{7}{0}}+\mathbf{\tbinom{6}{0}}+\mathbf{\tbinom{5}{0}}+\mathbf{\tbinom{4}{0}}+\mathbf{\tbinom{3}{0}}+\mathbf{\tbinom{2}{0}}+\mathbf{\tbinom{1}{1}}}+\tbinom{0}{0}
\end{align*}

imply $a_1=7\leq n_1= \vert [x_1x_2x_{11}, v_1]\vert =8+1=9$.

Now, we proceed with the construction of the ideal $I$ we are looking for, and so doing we refine the previous bounds for the $a_i$'s.

\begin{itemize}
\item[-] The greatest monomial of $A^s(8,3)$ is $x_1x_2x_{11}$. Since $p_1$ must be equal to $0$ and $a_1=7\leq n_1-p_1=9$, one can consider the greatest $a_1=7$ monomials of $A^s(8, 3)$. Such monomials can be obtained by  Algorithm~\ref{alg:2}. Hence, we set
\[
G(I)_3=B(x_1x_2x_{11}, x_1x_3x_{11}, x_1x_4x_{11}, x_1x_5x_{11}, x_1x_6x_{11}, x_1x_7x_{11}, x_1x_8x_{11}).
\]

\item[-] Let us consider the corner $(4,5)$.
By Algorithm~\ref{alg:1}, we compute the smallest monomial of $\BShad^2(G(I)_3)_{(4,5)}$, \emph{i.e.}, the monomial $x_1x_6x_7x_8x_9$; whereas, by Algorithm~\ref{alg:2}, we determine the greatest monomial of $A^s(4,5)\setminus$ $\BShad^2$ $(G(I)_3)_{(4,5)}$, \emph{i.e.}, $x_2x_3x_4x_5x_9$. Finally, from the binomial decomposition
\begin{align*}
\tbinom{8}{4}= \boxed{\mathbf{\tbinom{7}{3}}}+ \tbinom{6}{3}+\tbinom{5}{3}+\tbinom{4}{3}+\tbinom{3}{3}
\end{align*}
it follows that $p_2=\vert [x_1x_2x_3x_4x_9,x_2x_3x_4x_5x_9)\vert=35$. Hence, $n_2-p_2=40-35=5$ monomials are available. 
Therefore, since $a_2=5$, we set
\[
G(I)_5=B(x_2x_3x_4x_5x_9, x_2x_3x_4x_6x_9, x_2x_3x_4x_7x_9, x_2x_3x_4x_8x_9, x_2x_3x_5x_6x_9).
\]    

\item[-] Let us consider the corner $(3,6)$. One has $\min \BShad(G(I)_5)_{(3,6)}=$\\ $x_2x_3x_5x_6x_8x_9$ and $\max (A^s(3,6)\setminus \BShad(G(I)_5)_{(3,6)}$ $=x_2x_3x_5x_7x_8x_9$, and from 
\begin{align*}
\tbinom{8}{5}= \boxed{\mathbf{\tbinom{7}{4}}} + & \tbinom{6}{4} + \tbinom{5}{4} +\tbinom{4}{4}\\
 & \tbinom{6}{4}= 
 \begin{aligned}[t] & \tbinom{5}{3} + \tbinom{4}{3} +\tbinom{3}{3} \\
 & \tbinom{5}{3}= \boxed{\mathbf{\tbinom{4}{2}}} + 
     \begin{aligned}[t] & \tbinom{3}{2} +\tbinom{2}{2} \\
           & \tbinom{3}{2}= \boxed{\mathbf{\tbinom{2}{1}}} +\tbinom{1}{1}
      \end{aligned}
  \end{aligned}
\end{align*}
                                
we have
$p_3=\vert [x_1x_2x_3x_4x_5x_9,x_2x_3x_5x_7x_8x_9)\vert=43$. Hence, $n_3-p_3=45-43=2$.\\
Since $a_3=2$, we set
\[
G(I)_6=B(x_2x_3x_5x_7x_8x_9, x_2x_3x_6x_7x_8x_9).
\]      

\item[-] If one considers the corner $(2,9)$, since 
\[\min \BShad^3(G(I)_6)_{(2,9)}=x_2x_3x_5x_6x_7x_8x_9x_{10}x_{11}\]
\[\max(A^s(2,9)\setminus \BShad^3(G(I)_6))_{(2,9)}=x_2x_4x_5x_6x_7x_8x_9x_{10}x_{11},\]
from
\begin{align*}
\tbinom{10}{8}= \boxed{\mathbf{\tbinom{9}{7}}} + & \tbinom{8}{7} + \tbinom{7}{7}\\
 & \tbinom{8}{7}= \boxed{\mathbf{\tbinom{7}{6}}} + \tbinom{6}{6}                                                            
\end{align*}

it follows $p_4=\vert [x_1x_2x_3x_4x_5x_6x_7x_8x_{11},x_2x_4x_5x_6x_7x_8x_9x_{10}x_{11})\vert=43$.\\
So $n_4-p_4=\binom{10}{8}-p_4=45-43=2$. 
Hence, since $a_4=2$, we can set
\[
G(I)_9=B(x_2x_4x_5x_6x_7x_8x_9x_{10}x_{11}, x_3x_4x_5x_6x_7x_8x_9x_{10}x_{11}).
\] 
\end{itemize}

The Betti table of the squarefree strongly stable $I$ just constructed is the following one:
\[
\begin{array}{ccccccccccc}
    &   & 0 & 1 & 2 & 3 & 4 & 5 & 6 & 7 & 8 \\
\hline
  3 & : & 42 & 217 & 553 & 861 & 875 & 587 & 252 & 63 & 7 \\
  4 & : & - & - & - & - & - & - & - & - & - \\
  5 & : & 13 & 39 & 45 & 24 & 5 & - & - & - & - \\
  6 & : & 2 & 6 & 6 & 2 & - & - & - & - & - \\
  7 & : & - & - & - & - & - & - & - & - & - \\
  8 & : & - & - & - & - & - & - & - & - & - \\
  9 & : & 2 & 4 & 2 & - & - & - & - & - & - \\
\end{array}
\]
\end{ex}
\vspace{0,4cm}

One can observe that Theorem \ref{thm:constru} assures the correctness of the next Algorithm~\ref{alg:3}.

\begin{center}
\begin{figure}
\scalebox{0.84}{
\begin{algorithm}[H]
  \caption{Computation of the basic monomials for the given data}
  \label{alg:3}
  \KwIn{Polynomial ring $S$, list of corners $\{(k_i,\ell_i)\}$, list of values $(a_i)$}
  \KwOut{list of monomials $mons$}
  \SetKw{Error}{error}
  \Begin{
  $hyp \gets $ logical conditions required as hypotheses of the Theorem\ref{thm:constru}\;
  \If{$hyp$}{
    $m \gets k_0+\ell_0$\;
    $w \gets S_1*\ldots*S_{\ell_0-1}*S_{m}$\tcp*[r]{first corner}    
    $mons \gets \{w\}$\;
    \ForEach{$j\in \{2\, .\, .\, a_0\}$}{
      $w \gets $next monomial of $w$\tcp*[r]{calling Algorithm~\ref{alg:2}}
      \eIf{no monomial}{
        \Error{no ideal}\;
      }{
        $mons \gets$ $mons\ \cup$ $\{w\}$\;
      } 
    } 
    $r \gets$ number of corners\tcp*[r]{successive corners}
    \ForEach{$i\in \{2\, .\, .\, r\}$}{
      $w \gets \min\BShad(mons)_{(k_{i-1},\ell_{i-1})}$ \tcp*[r]{calling Algorithm~\ref{alg:1}}
      \ForEach{$j\in \{1\, .\, .\, a_i\}$}{
        $w \gets$ next monomial of $w$\tcp*[r]{calling Algorithm~\ref{alg:2}}
        \eIf{no monomial}{
          \Error{no ideal}\;
        }{
          $mons \gets$ $mons\ \cup$ $\{w\}$\;        
        }
      }      
    }
  }
  \Return $mons$\;
}
\end{algorithm}
}
\end{figure}
\end{center}

We close the Section with an example that illustrates a \emph{situation} where the construction of a  squarefree strongly stable ideal is not possible.
\begin{ex}
Let $n=10$, $r=3$, $\mathcal C = \{(6,2),(5,4),(3,7)\}$ and $a=(a_1,a_2,a_3)=(2,1,4)$.
We have $\vert A^s(3,7)\vert=\binom{9}{6}=84$, so it is possible to manage $a_3=4\leq 84$ monomials.

Let us consider the set $A_2$ consisting of the smallest four monomials in $A^s(3,7)$:
\[
A_2=\{x_3x_4x_5x_7x_8x_9x_{10}, x_3x_4x_6x_7x_8x_9x_{10}, x_3x_5x_6x_7x_8x_9x_{10}, x_4x_5x_6x_7x_8x_9x_{10}\},
\]
and let us try to get the smallest monomial $z \in A^s(5,4)$ such that $x_3x_4x_5x_6x_8x_9x_{10}$ $\notin$ $\BShad(z)_{(3,7)}$. It is $z=x_2x_7x_8x_9$. Now, we compute $\vert[x_1x_2x_3x_9,z]\vert$ as bound for $a_2$:
\begin{align*}
\tbinom{8}{3}= \boxed{\mathbf{\tbinom{7}{2}}}+ & \tbinom{6}{2}+\tbinom{5}{2}+\tbinom{4}{2}+\tbinom{3}{2}+\tbinom{2}{2}\\
& \tbinom{6}{2}= \boxed{\mathbf{\tbinom{5}{1}}+\mathbf{\tbinom{4}{1}}+\mathbf{\tbinom{3}{1}}+\mathbf{\tbinom{2}{1}}}+\tbinom{1}{1}.
\end{align*}
We have $n_2=21+(5+4+3+2)+1=36$ monomials greater than $z$ and so $a_2=1\leq 36$.\\
Note that if $z$ does not exist, then it is clear that we can not go on.

Now, we try to verify the bound for $a_1$ taking into account the previous results. Consider the monomial $z\in A^s(5,4)$,
and take the greatest monomial $w$ of $A^s(6,2)$ such that $z\notin \BShad(w)_{(3,7)}$.
It is $w=x_1x_8$. We can note that $w$ is the smallest monomial of $A^s(6,2)$, \emph{i.e.}, $\vert[x_1x_8,w]\vert =1$. 

Hence, we have that $a_1\leq 1$. For this reason the requested value for $a_1=2$ is not admissible and there does not exist any squarefree monomial ideal $I$ of $K[x_1,\ldots,x_{10}]$ such that $\C(I)=\mathcal{C}$ and $a(I)=a$.

Nevertheless, there exists a squarefree monomial ideal $J$ of $S$ such that $\C(J)=\mathcal{C}$ and $a(J)=(1,1,4)$.
\end{ex}



\begin{thebibliography}{99}
\bibitem{AC} L.~Amata and M.~Crupi, {\em Computation of graded ideals with given extremal Betti numbers in a polynomial ring}, J. Symbolic Computation, {93} (2019), 120--132.

\bibitem{AHH2} A.~Aramova, J.~Herzog and T.~Hibi, {\em Squarefree lexsegment ideals},  Math. Z., {228} (1998), 353--378.

\bibitem{AHH3} A.~Aramova, J.~Herzog and T.~Hibi, {\em Shifting operations and graded Betti numbers ideals}, J. Algebraic Combin., {12} (2000), 207--222.

\bibitem{BCP} D.~Bayer, H.~Charalambous and S.~Popescu, {\em Extremal Betti numbers and Applications to Monomial Ideals}, J. Algebra, {221} (1999), 497--512.

\bibitem{BH} {W.~Bruns and J.~Herzog}, Cohen-Macaulay rings, Cambridge Studies in Advanced Mathematics, 39, Cambridge University Press, Cambridge, 1998.

\bibitem{CW} S.~M.~Cooper and S.~S.~Wagstaff, {Connection between Algebra, Combinatorics and Geometry}, Springer Proceedings in Mathematics \& Statistics, {76}, Springer--Verlag, 2014.

\bibitem{MC} M.~Crupi, {\em Extremal Betti numbers of graded modules}, J. Pure Appl. Algebra, {220} (2016), 2277--2288.

\bibitem{MC2} M.~Crupi, {\em A constructive method for standard Borel fixed submodules with given extremal Betti numbers}, Mathematics, {56}(5) (2017), 1--26.

\bibitem{MC3} M.~Crupi, {\em Computing general strongly stable modules with given extremal Betti numbers}, J. Commut. Algebra, 12(1) (2020), 53--70.

\bibitem{CF4} M.~Crupi and C.~Ferr\`{o}, {\em Squarefree monomial modules and extremal Betti numbers}, Algebra Colloq., {23(3)} (2016), 519-530.

\bibitem{CU1} M.~Crupi and R.~Utano, {\em Extremal Betti numbers of lexsegment ideals}, Lecture notes in Pure and Applied Math., Geometric and combinatorial aspects of Commutative algebra, {217} (2000), 159--164.

\bibitem{CU2} M.~Crupi and R.~Utano, {\em Extremal Betti numbers of graded ideals}, Results Math., {43} (2003), 235-244.

\bibitem{Ei} D.~Eisenbud, {Commutative Algebra with a view toward  Algebraic Geometry}, Graduate Texts in Mathematics, 150, Springer-Verlag, 1995.

\bibitem{EK} S.~Eliahou and M.~Kervaire, {\em Minimal resolutions of some monomial ideals}, J. Algebra, {129} (1990), 1--25.

\bibitem{GDS} {D.R.~Grayson and M.E.~Stillman}, Macaulay2, a software system for research in algebraic geometry, available at \url{http://www.math.uiuc.edu/Macaulay2}.

\bibitem{JT} J.~Herzog and T.~Hibi, {Monomial ideals}, Graduate texts in Mathematics, {260}, Springer--Verlag, 2011.

\bibitem{HSV} J.~Herzog, L.~Sharifan and M.~Varbaro, {\em The possible extremal Betti numbers of a homogeneous ideal}, Proc. Amer. Math. Soc., {142} (2014), 1875--1891.

\bibitem{MS}  E.~Miller and B.~Sturmfels, {Combinatorial Commutative Algebra}, Graduate Texts in Mathematics, 227, Springer-Verlag, 2005.

\end{thebibliography}

\end{document}